\documentclass[a4paper,10pt]{amsart}
\usepackage[utf8]{inputenc}
\usepackage[T2A]{fontenc}
\usepackage[english]{babel}

\usepackage{amssymb}
\usepackage{amsthm}

\newtheorem{lemma}{Lemma}
\newtheorem{theorem}{Theorem}

\def\hm#1{#1\nobreak\discretionary{}{\hbox{\ensuremath{#1}}}{}}

\def\id{\mathop{\mathrm{id}}}

\def\mue{\mu^{(e)}}
\def\phie{{\mathfrak f}^{(e)}}

\def\taue{\tau^{(e)}}
\def\taui{\tau^{\infty}}
\def\tauei{\tau^{(e)\infty}}

\def\sigmae{\sigma^{(e)}}

\def\sigmai{\sigma^{\infty}}
\def\sigmaei{\sigma^{(e)\infty}}

\def\eps{\varepsilon}

\def\res{\mathop{\mathrm{res}}}
\def\llog{\mathop{\mathrm{llog}}\nolimits}
\def\lllog{\mathop{\mathrm{lllog}}\nolimits}

\def\le{\leqslant}
\def\ge{\geqslant}

\def\oeis#1{\href{http://oeis.org/#1}{#1}}


\usepackage[unicode]{hyperref}
\usepackage{breakurl}

\begin{document}

\title{Exponential and infinitary divisors}
\author{Andrew V. Lelechenko}
\address{I.~I.~Mechnikov Odessa National University}
\email{1@dxdy.ru}

\keywords{Exponential divisors, infinitary divisors, modified sum-of-divisors function, modified totient function}
\subjclass[2010]{
11A25,  
11N37, 
11N56
}

\begin{abstract}
Our paper is devoted to several problems from the field of modified divisors: namely exponential and infinitary divisors. We study the behaviour of modified divisors, sum-of-divisors and totient functions. Main results concern with the asymptotic behaviour of mean values and explicit estimates of extremal orders.
\end{abstract}

\maketitle

\section{Introduction}\label{s:introduction}

Let $m$ be an {\em exponential divisor} (or {\em e-divisor}) of $n$ (denote $m \mid^{(e)} n$) if $m\mid n$ and for each prime $p\mid n$ we have $a\mid b$, where $p^a \,||\, m$, $p^b \,||\, n$. This concept, introduced by Subbarao \cite{subbarao1972}, leads us to the {\em e-divisor function} $\taue(n) = \sum_{m\mid^{(e)} n} 1$ (sequence \oeis{A049419} in OEIS~\cite{oeis2011}) and {\em sum-of-e-divisors function} $\sigmae(n) = \sum_{m\mid^{(e)} n} m$ (sequence \oeis{A051377}). These functions were studied by many authors, including among others Wu and Pétermann \cite{petermann1997, wu1995}.

Consider a set of arithmetic functions $\mathcal A$, a set of multiplicative prime-in\-de\-pen\-dent functions~$\mathcal M_{PI}$ and an operator $E\colon {\mathcal A} \to {\mathcal M}_{PI}$ such that
$$
(Ef)(p^a) = f(a).
$$
One can check that $\taue = E\tau$, but $\sigmae\ne E\sigma$. Section~\ref{s:E-sigma} is devoted to the latter new function~$E\sigma$.

On contrary several authors, including Tóth~\cite{toth2004, toth2007a} and Pé\-ter\-mann~\cite{petermann2010}, studied exponential analogue of the totient function, defining~$\phi^{(e)} = Ef$. However $\phi^{(e)}$ lacks many significant properties of $\phi$: it is pri\-me-in\-de\-pen\-dent and $\phi^{(e)} \ll n^\eps$. In Section~\ref{s:phie} we construct more natural modification of the totient function, which will be denoted by~$\phie$.

\medskip

One can define {\em unitary divisors} as follows: $m \mid^* n$ if $m\mid n$ and $\gcd(m,n/m)=1$. Further, define {\em bi-unitary divisors:} $m \mid^{**} n$ if $m \mid n$ and greatest common unitary divisor of $m$ and $n/m$ is 1; define {\em tri-unitary divisors:} $m \mid^{***} n$ if $m \mid n$ and greatest common bi-unitary divisor of $m$ and $n/m$ is 1; and so on. It appears that this process converges to the set of so-called {\em infinitary divisors} (or $\infty$-divisors): $m \mid^\infty n$ if $m\mid n$ and for each $p\mid n$, $p^a \,||\, m$, $p^b \,||\, n$, the binary digits of $a$ have zeros in all places, where $b$'s have. This notation immediately induces $\infty$-divisor function~$\taui$ (sequence \oeis{A037445}) and sum-of-$\infty$-divisors function $\sigmai$ (sequence \oeis{A049417}). See Cohen \cite{cohen1990}.

Recently Minculete and Tóth \cite{minculete2011d} defined and studied an exponential analogue of unitary divisors. We introduce e-$\infty$-divisors: $m \mid^{(e)\infty} n$ if $m\mid n$ and for each $p\mid n$, $p^a \,||\, m$, $p^b \,||\, n$, we have $a \mid^\infty b$. In Section~\ref{s:taui} we improve an estimate for $\sum_{n\le x} \taui(n)$ by Cohen and Hagis \cite{cohen1993} and briefly examine~$\tauei$. Section~\ref{s:perfect} is devoted to e-$\infty$-perfect numbers such that $\sigmaei(n) = 2n$.

\section{Notations}

Letter $p$ with or without indexes denotes a prime number. Notation~$p^a \,||\, n$ means that $p^a \mid n$, but $p^{a+1} \nmid n$.

We write $f\star g$ for Dirichlet convolution
$$ (f \star g)(n) = \sum_{d|n} f(d) g(n/d). $$

In asymptotic relations we use $\sim$, $\asymp$, Landau symbols $O$ and $o$, Vinogradov symbols $\ll$ and~$\gg$ in their usual meanings. All asymptotic relations are given as an argument (usually $x$) tends to the infinity.

Letter $\gamma$ denotes Euler---Mascheroni constant.  Everywhere $\eps>0$ is an arbitrarily small number (not always the same even in one equation).

As usual $\zeta(s)$ is the Riemann zeta-function.
Real and imaginary components of the complex~$s$ are denoted as $\sigma:=\Re s$ and~$t:=\Im s$, so~$s=\sigma+it$.

We abbreviate $\llog x := \log\log x$, $\lllog x := \log\log\log x$, where $\log x$ is a natural logarithm.

\medskip

Let $\tau$ be a divisor function, $\tau(n) = \sum_{d|n} 1$.
Denote
$$
\tau(a_1,\ldots,a_k; n) = \sum_{d_1^{a_1}\cdots d_k^{a_k} = n} 1
$$
and $\tau_k = \tau(\underbrace{1,\ldots,1}_{k\text{~times}}; \cdot)$.
Then $\tau \equiv \tau_2 \equiv \tau(1,1; \cdot)$.

Now let $\Delta(a_1,\ldots,a_k; x)$ be an error term in the asymptotic estimate of the sum~$\sum_{n\le x} \tau(a_1,\hm\ldots,a_k; n)$. (See \cite{kratzel1988} for the form of the main term.) For the sake of brevity denote $
\Delta_k(x) = \Delta(\underbrace{1,\ldots,1}_{k\text{~times}}; x)
$.

Finally, $\theta(a_1,\ldots,a_k)$ denotes throughout our paper a real value such that
$$ \Delta(a_1,\ldots,a_k; x) \ll x^{\theta(a_1,\ldots,a_k)+\eps} $$
and we write $\theta_k$ for the exponent of $x$ in $\Delta_k(x)$.


\section{Values of $E\sigma$}\label{s:E-sigma}

\begin{theorem}
\begin{equation}\label{eq:Esigma-limsup}
\limsup_{n\to\infty} {\log E\sigma (n) \llog n \over \log n} = {\log 3 \over 2}.
\end{equation}
\end{theorem}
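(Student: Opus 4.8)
The plan is to reduce the problem to a weighted optimization over prime factorizations and then pair a near-optimal family with a matching dual bound. Since $E\sigma$ is multiplicative and prime-independent with $(E\sigma)(p^a)=\sigma(a)$, for $n=\prod_{p^a\,||\,n}p^a$ we have $\log E\sigma(n)=\sum_{p^a\,||\,n}\log\sigma(a)$ while $\log n=\sum_{p^a\,||\,n}a\log p$. Thus maximizing $\log E\sigma(n)$ for a prescribed size of $n$ amounts to asking how much $\log\sigma$ one can accumulate per unit of $\log n$. The whole constant $\tfrac{\log 3}{2}$ will come from the elementary fact that $a\mapsto\log\sigma(a)/a$ attains its maximum $\tfrac{\log 3}{2}$ precisely at $a=2$; I would isolate this as a lemma, checking small $a$ by hand and disposing of large $a$ through Gronwall's bound $\sigma(a)\ll a\llog a=o(3^{a/2})$.

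For the lower bound I would test the family $n_y=\prod_{p\le y}p^2$. Here $\log E\sigma(n_y)=\pi(y)\log 3$ and $\log n_y=2\theta(y)$, so by the prime number theorem $\log n_y\sim 2y$, $\llog n_y\sim\log y$ and $\pi(y)\sim y/\log y$, whence the quotient in \eqref{eq:Esigma-limsup} tends to $\tfrac{\log 3}{2}$. This shows the limsup is at least $\tfrac{\log 3}{2}$ and identifies squares of the first primes as the conjecturally extremal shape.

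For the upper bound I would argue by duality. Fix $n$, write $N=\log n$, $T=\llog n$, and set $\lambda=\tfrac{\log 3}{2T}$. Splitting off the linear part,
\[
\log E\sigma(n)=\lambda N+\sum_{p\mid n}\bigl(\log\sigma(a_p)-\lambda a_p\log p\bigr)\le\lambda N+\sum_p h_p,
\]
where $h_p=\max_{a\ge 1}\bigl(\log\sigma(a)-\lambda a\log p\bigr)^+$. By the lemma $h_p>0$ only when $\log p<T$, i.e. $p<e^T$; near this threshold the maximizing exponent is $a=2$, giving $h_p\approx\log 3\,(1-\log p/T)$, while for much smaller $p$ one has $h_p\approx\log(T/\log p)$ from $\log\sigma(a)\le\log a+\llog a+O(1)$. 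The crucial point is that the primes contributing a non-negligible $h_p$ are exactly those with $\log p$ bounded away from $T$, and these are exponentially rarer than $\pi(e^T)\asymp e^T/T$. A Chebyshev/PNT estimate then yields $\sum_p h_p\ll N/T^2=o(N/T)$, so $\log E\sigma(n)\le\bigl(\tfrac{\log 3}{2}+o(1)\bigr)N/T$ and the limsup is at most $\tfrac{\log 3}{2}$.

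The main obstacle is precisely this last summation: although a single prime with $\log p\approx T/2$ can carry an $h_p$ of constant size, I must show the total over all primes is only $O(N/T^2)$. The heart of the matter is that the counting measure of primes up to $e^{u}$ concentrates near $u=T$, where $h_p\to 0$ linearly; making this rigorous means writing $\sum_p h_p$ as essentially $\int_0^T(1-u/T)^+e^u u^{-1}\,du$ and verifying that the substitution $u=T-v$ localizes the integral to $v=O(1)$, producing the saving factor $1/T$. Controlling the genuinely larger values of $h_p$ at the few small primes, where exponents above $2$ beat $a=2$, is then routine by comparison, since those primes number only $o(e^T/T)$.
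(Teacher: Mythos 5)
Your proposal is correct but follows a genuinely different route from the paper. The paper obtains the whole statement in two lines by citing a theorem of Suryanarayana and Sita Rama Chandra Rao, which asserts for prime-independent functions $Ef$ that $\limsup_{n\to\infty} \log Ef(n)\,\llog n/\log n=\sup_{a\ge1}\log f(a)/a$; the only remaining work is the elementary optimization $\sup_{a\ge1}\log\sigma(a)/a=\frac{\log 3}{2}$, settled by checking $a\le 6$ by hand and invoking Ivi\'c's explicit bound $\sigma(a)<2.59\,a\llog a$ for $a>6$. You are in effect reproving that general reduction in the special case $f=\sigma$: your lower-bound family $n_y=\prod_{p\le y}p^2$ is exactly the expected extremal shape, and your Lagrangian upper bound with $\lambda=\frac{\log 3}{2T}$ does go through --- the critical estimate $\sum_p h_p\ll N/T^2$ holds because for $\log p>T/2$ only boundedly many exponents $a$ can make $h_p$ positive (so $h_p\ll 1-\log p/T$ there), and $\sum_{p\le e^T}\bigl(1-\log p/T\bigr)=T^{-1}\int_2^{e^T}\pi(t)\,t^{-1}\,dt\sim e^T/T^2=N/T^2$, while the primes with $\log p\le T/2$ contribute only $O(e^{T/2}\log T)$, which is negligible. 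Your version buys self-containedness and makes visible where the extremal integers come from; the paper's buys brevity by outsourcing the analytic step to the cited reference. One caution: to make ``check small $a$ plus Gronwall'' rigorous for $\sup_a\log\sigma(a)/a$ you need an explicit inequality for $\sigma(a)$ (e.g.\ the trivial $\sigma(a)\le 2a^{3/2}$, or Ivi\'c's bound as in the paper) so that the finite verification has a definite endpoint.
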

\begin{proof}
Theorem of Suryanarayana and Sita Rama Chandra Rao \cite{suryanarayana1975} shows that
$$
\limsup_{n\to\infty} {\log E\sigma (n) \llog n \over \log n}
= \sup_{n\ge1} {\log \sigma(n) \over n}.
$$
The supremum can be split into two parts: we have
$$
\max_{n\le6} {\log\sigma(n) \over n}
= {\log\sigma(2) \over 2} = {\log3\over2}
$$
and for $n>6$ we apply estimate of Ivić~\cite{ivic1977}
\begin{equation}\label{eq:sigma-ivic}
\sigma(n) < 2.59 n \llog n
\end{equation}
to obtain
$$
{\log \sigma(n) \over n}
< {\log 2.59 + \log n + \lllog n \over n} := f(n),
$$
where $f$ is a decreasing function for $n>6$ and $f(7) < (\log 3) / 2$. Thus
$$
\sup_{n\ge 1} {\log\sigma(n) \over n} = {\log 3 \over  2}. $$
\end{proof}

Equation~\eqref{eq:Esigma-limsup} shows that $E\sigma(n) \ll n^\eps$.

\begin{theorem}
\begin{equation}\label{eq:E-sigma-sum}
\sum_{n\le x} E\sigma(n) = C_1 x + (C_2\log x + C_3) x^{1/2} + C_4 x^{1/3} + E(x),
\end{equation}
where $C_1$, $C_2$, $C_3$, $C_4$ are computable constants and
$$
x^{1/5} \ll E(x) \ll x^{1153/3613+\eps}.
$$
\end{theorem}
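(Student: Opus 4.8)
The plan is to pass to the Dirichlet series of $E\sigma$ and split off a product of zeta-factors. Since $E\sigma$ is multiplicative and prime-independent with $(E\sigma)(p^a)=\sigma(a)$, its generating function is the Euler product $\prod_p\bigl(1+\sum_{a\ge1}\sigma(a)p^{-as}\bigr)$. First I would strip zeta-factors by dividing each local factor successively by $1-p^{-s}$, by $(1-p^{-2s})^2$ and by $1-p^{-3s}$; using $\sigma(1)=1$, $\sigma(2)=3$, $\sigma(3)=4$, $\sigma(4)=7$ one finds that the surviving local factor is $1-3p^{-5s}+O(p^{-6s})$. Hence
\begin{equation*}
\sum_{n\ge1}\frac{E\sigma(n)}{n^s}=\zeta(s)\,\zeta(2s)^2\,\zeta(3s)\,H(s),
\end{equation*}
where $H(s)=\prod_p\bigl(1-3p^{-5s}+O(p^{-6s})\bigr)$ converges absolutely for $\Re s>1/5$.

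Writing $H(s)=\sum_n h(n)n^{-s}$, the factorisation reads $E\sigma=\tau(1,2,2,3;\cdot)\star h$ with $\sum_n|h(n)|n^{-\sigma}<\infty$ for real $\sigma>1/5$. I would then borrow the asymptotics of the generalized divisor sum $\sum_{d\le y}\tau(1,2,2,3;d)$: its main term comes from the poles of $\zeta(s)\zeta(2s)^2\zeta(3s)$ at $s=1$, at $s=1/2$ (a double pole) and at $s=1/3$, which after convolution with $h$ produce exactly the terms $C_1x$, $(C_2\log x+C_3)x^{1/2}$ and $C_4x^{1/3}$, the constants being computable from values of $\zeta$ and $H$ at $1$, $1/2$, $1/3$. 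Splitting
\begin{equation*}
\sum_{n\le x}E\sigma(n)=\sum_e h(e)\sum_{d\le x/e}\tau(1,2,2,3;d),
\end{equation*}
the remaining error is $\ll\sum_e|h(e)|\,\Delta(1,2,2,3;x/e)$; since the abscissa $1/5$ lies strictly below the target exponent, the tail in $e$ converges and one obtains $E(x)\ll x^{\theta(1,2,2,3)+\eps}$.

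The hard part, and the heart of the theorem, is to prove $\theta(1,2,2,3)\le1153/3613$. This is a four-dimensional divisor problem counting $d_1d_2^2d_3^2d_4^3\le x$. I would sum the exponent-one variable $d_1$ by the hyperbola method and merge the two exponent-two variables (so that $\sum_{d_2,d_3}$ becomes $\sum_m\tau(m)$ over $m=d_2d_3$), reducing $\Delta(1,2,2,3;\cdot)$ to multiple exponential sums of sawtooth type $\sum\psi(\cdot)$ with frequencies built from square and cube roots. Bounding these by the theory of exponent pairs and optimizing the split of the summation ranges is where essentially all the technical difficulty resides; the precise value $1153/3613$ is the output of that optimization and is the step most sensitive to the method chosen.

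Finally, for the lower bound $E(x)\gg x^{1/5}$ (understood in the $\Omega$ sense) I would exploit that $\Re s=1/5$ is precisely the abscissa of absolute convergence of $H$, its first nontrivial coefficients being carried by the fifth powers with $h(p^5)=-3$. A standard $\Omega$-argument based on this fifth-power contribution then shows that $E(x)$ cannot have order below $x^{1/5}$. I expect this to be the less delicate direction once the factorisation above is in hand.
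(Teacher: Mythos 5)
Your setup coincides with the paper's: the Euler-product computation giving $F(s)=\zeta(s)\zeta^2(2s)\zeta(3s)H(s)$ with $H$ absolutely convergent for $\sigma>1/5$, the identification $E\sigma=\tau(1,2,2,3;\cdot)\star h$, and the convolution argument transferring the asymptotics of $\sum_{n\le x}\tau(1,2,2,3;n)$ to $\sum_{n\le x}E\sigma(n)$ are all exactly what the paper does (and your local factor $1-3p^{-5s}+O(p^{-6s})$ is correct). The difference is at the two endpoints. For the upper bound you correctly isolate $\theta(1,2,2,3)\le 1153/3613$ as the crux, but you only sketch a programme (hyperbola method, merging the two exponent-$2$ variables, exponent pairs) without carrying it out; the specific value $1153/3613$ is never derived. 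The paper does not derive it from scratch either: it invokes Krätzel's Theorem~3 from his 1992 paper together with Huxley's exponent pair $k=32/205+\varepsilon$, $l=k+1/2$, which yields precisely that exponent. So this part of your argument is a gap, but a citable one; as written, though, the central numerical claim of the theorem is unproved.

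The lower bound is where your proposal actually goes wrong. You argue that since $\sigma=1/5$ is the abscissa of absolute convergence of $H(s)$, with $h(p^5)=-3$, a ``standard $\Omega$-argument'' gives $E(x)\gg x^{1/5}$ in the $\Omega$ sense. But failure of absolute convergence on a line is not a singularity, and the Landau-type machinery that converts analytic obstructions into $\Omega$-results needs a genuine singularity of $F(s)$ minus its main terms at $\Re s=1/5$. Here there is none: from the local factor $1-3p^{-5s}+O(p^{-6s})$ one gets $H(s)=\zeta^{-3}(5s)H_1(s)$ with $H_1$ absolutely convergent for $\sigma>1/6$, and $\zeta^{-3}(5s)$ is analytic at $s=1/5$ (it even vanishes there, since $\zeta$ has a pole at $1$); its only singularities come from zeros of $\zeta(5s)$, which lie at $\Re s=\beta/5<1/5$. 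So your mechanism produces nothing at exponent $1/5$. The paper instead obtains $E(x)=\Omega(x^{1/5})$ from the omega theorem of K\"uhleitner and Nowak for this class of generating Dirichlet series, which is a different (constructive resonance-type) argument; some substitute of that kind is genuinely needed here.
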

\begin{proof}
Let $F(s) = \sum_{n=1}^\infty E\sigma(n) n^{-s}$. We have utilizing~\eqref{eq:sigma-ivic}
\begin{multline*}
F(s)
= \prod_p \sum_{a=0}^\infty E\sigma(p^a) p^{-as}
= \prod_p \biggl( 1 + \sum_{a=1}^\infty \sigma(a) p^{-as} \biggr)
=\\=
 \prod_p \left( 1 + p^{-s} + 3p^{-2s} + 4p^{-3s} + 7p^{-4s} + O(p^{\eps-5s}) \right)
=\\=
 \prod_p {1+O(p^{\eps-5s}) \over (1-p^{-s})(1-p^{-2s})^2(1-p^{-3s})},
\end{multline*}
so
\begin{equation}\label{eq:E-sigma-series}
F(s) = \zeta(s) \zeta^2(2s) \zeta(3s) H(s),
\end{equation}
where series $H(s)$ converges absolutely for $\sigma>1/5$.

Equation~\eqref{eq:E-sigma-series} shows that
$$E\sigma = \tau(1,2,2,3;\cdot) \star h,$$ where $\sum_{n\le x} |h(n)| \ll x^{1/5+\eps}$. We apply the result of Krätzel~\cite[Th.~3]{kratzel1992} together with Huxley's~\cite{huxley2005} exponent pair $k=32/205+\eps$, $l=k+1/2$  to obtain
$$
\sum_{n\le x} \tau(1,2,2,3; n) = B_1 x + (B_2\log x + B_3) x^{1/2} + B_4 x^{1/3} + O(x^{1153/3613+\eps})
$$
for some computable constants $B_1$, $B_2$, $B_3$, $B_4$.
Now convolution argument certifies~\eqref{eq:E-sigma-sum} and the upper bound of~$E(x)$. The lower bound for~$E(x)$ follows from the theorem of Küh\-leit\-ner and Nowak \cite{kuhleitner1994}.
\end{proof}

\begin{theorem}
$$
\sum_{n\le x} (E\sigma(n))^2 = D x + P_7(\log x) x^{1/2} + E(x),
$$
where $D$ is a computable constant, $P_7$ is a polynomial with $\deg P = 7$ and
$$
x^{4/17} \ll E(x) \ll x^{8/19+\eps}.
$$
\end{theorem}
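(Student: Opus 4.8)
The plan is to follow the same route as the preceding theorem: pass to the Dirichlet series of $(E\sigma)^2$, factor it against a product of zeta-functions, read the main terms off the poles, and reduce the error to a multidimensional divisor problem governed by Krätzel's theorem. First I would set $G(s) = \sum_{n=1}^\infty (E\sigma(n))^2 n^{-s}$. Since $E\sigma$ is multiplicative and prime-independent with $E\sigma(p^a)=\sigma(a)$, the function $(E\sigma)^2$ is again multiplicative with $(E\sigma)^2(p^a)=\sigma(a)^2$, so that
\begin{equation*}
G(s) = \prod_p \left( 1 + \sum_{a=1}^\infty \sigma(a)^2 p^{-as} \right)
     = \prod_p \left( 1 + p^{-s} + 9 p^{-2s} + 16 p^{-3s} + 49 p^{-4s} + \cdots \right),
\end{equation*}
using $\sigma(1)^2=1$, $\sigma(2)^2=9$, $\sigma(3)^2=16$, $\sigma(4)^2=49$. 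Matching the first coefficients forces
\begin{equation*}
G(s) = \zeta(s)\,\zeta^8(2s)\,\tilde H(s),
\end{equation*}
where the exponents $1$ and $8$ are dictated by $1=\sigma(1)^2$ and $1+\alpha=\sigma(2)^2=9$. The next pole of $\tilde H$ sits at $s=1/3$, arising from a factor $\zeta^7(3s)$ (since $1+8+\beta=\sigma(3)^2=16$ gives $\beta=7$), so $\tilde H$ converges absolutely for $\sigma>1/3$. The simple pole at $s=1$ yields the term $Dx$, and the pole of order $8$ at $s=1/2$ yields $x^{1/2}$ times a polynomial in $\log x$ of degree $8-1=7$, which is $P_7$.

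Next I would convert this analytic factorization into an arithmetic one, writing $(E\sigma)^2 = \tau(1,\underbrace{2,\ldots,2}_{8};\cdot)\star\tilde h$, where $\tilde h$ is the coefficient sequence of $\tilde H$ and $\sum_{n\le x}|\tilde h(n)| \ll x^{1/3+\eps}$. Inserting the estimate for the inner divisor sum into $\sum_{n\le x}(E\sigma(n))^2 = \sum_e \tilde h(e)\sum_{d\le x/e}\tau(1,2,\ldots,2;d)$, the pole at $s=1$ reassembles into $Dx$, the pole at $s=1/2$ into $x^{1/2}\tilde P_7(\log x)$ — the convolution only alters the coefficients of the degree-$7$ polynomial, since $\sum_e|\tilde h(e)|e^{-1/2}(\log e)^j<\infty$ — and the $\tilde h$-tail contributes $\ll x^{1/3+\eps}$, which is swallowed by $E(x)$ because $1/3<8/19$. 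Note that, in contrast to the preceding theorem, the pole at $s=1/3$ here lies below the size of the error term, so it need not be resolved as a separate main term; this is why $8/19$ comes out clean.

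It then remains to estimate $\Delta(1,\underbrace{2,\ldots,2}_{8};x)$, i.e. the error in the inner divisor sum for the nine-dimensional configuration with one exponent $1$ and eight exponents $2$. After removing the diagonal main terms this reduces to bounding the eight-fold $\psi$-sum $\sum_{m\le\sqrt x}\tau_8(m)\,\psi(x/m^2)$, where $\psi(t)=\{t\}-\tfrac12$. The crucial point — and the main obstacle — is that the weight $\tau_8$ cannot be pulled out without destroying the available cancellation, so the sum must be treated honestly as an eight-fold exponential sum rather than a single one; this is exactly what Krätzel's theorem, combined with a suitable exponent pair, accomplishes, giving $\theta(1,\underbrace{2,\ldots,2}_{8})=8/19$ and hence $E(x)\ll x^{8/19+\eps}$. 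Finally, the lower bound $x^{4/17}\ll E(x)$ follows from the corresponding $\Omega$-estimate of Kühleitner and Nowak for asymmetric divisor problems.
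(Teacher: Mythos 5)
Your proposal follows the paper's proof essentially step for step: the same Euler-product computation giving $\zeta(s)\zeta^8(2s)$ times a factor absolutely convergent for $\sigma>1/3$, the same reading of the main terms $Dx$ and $P_7(\log x)x^{1/2}$ off the poles at $s=1$ and $s=1/2$, the same convolution reduction to $\theta(1,\underbrace{2,\ldots,2}_{8})$, and the same appeal to K\"uhleitner--Nowak for the $\Omega$-bound. The one place you drift from the actual argument is the provenance of $8/19$: in the paper it is not obtained by an exponent-pair treatment of the sum $\sum_{m\le\sqrt x}\tau_8(m)\psi(x/m^2)$, but from the iteration formula of Kr\"atzel's Theorem~6.8, namely $\theta(1,2,\ldots,2)\le 1/(1+2-\theta_8)$, fed with Heath-Brown's bound $\theta_8\le 5/8$ for the eighth Piltz divisor problem, so that $8/19=1/(3-5/8)$. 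Your final step should therefore be replaced by that citation; as written, the claim that a ``suitable exponent pair'' yields $8/19$ is not substantiated and is not where the number comes from.
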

\begin{proof}
We have
\begin{multline*}
\sum_{n=1}^\infty (E\sigma(n))^2 n^{-s}
= \prod_p \biggl( 1 + \sum_{a=1}^\infty (\sigma(a))^2 p^{-as} \biggr)
=\\=
 \prod_p \left( 1 + p^{-s} + 9p^{-2s} + O(p^{\eps-3s}) \right)
=\\=
 \prod_p {1+O(p^{\eps-3s}) \over (1-p^{-s})(1-p^{-2s})^8}
= \zeta(s) \zeta^8(2s) G(s),
\end{multline*}
where series $G(s)$ converges absolutely for $\sigma>1/3$.

$\Omega$-estimate of the error term $E(x)$ follows again from \cite{kuhleitner1994}. To obtain~$E(x) \hm\ll x^{8/19}$ we use~\cite[Th.~6.8]{kratzel1988}, which implies
$$
\theta(1,2,2,2,2,2,2,2,2) \le
{1\over 1+2-\theta_8} \le {8\over 19}.
$$
Here we used the estimate of Heath-Brown $\theta_8 \le 5/8$ \cite[p. 325]{titchmarsh1986}.
\end{proof}

\section{Values of $\phie$}\label{s:phie}

For the usual Möbius function $\mu$, identity function $\id$ and unit function $\mathbf1$ we have
\begin{align*}
\tau &= \mathbf1 \star \mathbf1, \\
\id &= \mathbf1 \star \mu, \\
\sigma &= \mathbf1 \star \id.
\end{align*}

Subbarao introduced in \cite{subbarao1972} the exponential convolution $\odot$ such that for multiplicative $f$ and~$g$ their convolution $f\odot g$ is also multiplicative with
\begin{equation}\label{eq:exp-conv}
(f\odot g)(p^a) = \sum_{d|a} f(p^d) g(p^{a/d}).
\end{equation}
For function $\mue = E\mu$ and defined in Section~\ref{s:introduction} functions $\taue$ and $\sigmae$ we have
\begin{align*}
\taue &= \mathbf1 \odot \mathbf1, \\
\id &= \mathbf1 \odot \mue, \\
\sigmae &= \mathbf1 \odot \id.
\end{align*}

This leads us to the natural definition of $\phie=\mue\odot\id$ (similar to usual $\phi \hm= \mu\star\id$). Then by definition~\eqref{eq:exp-conv}
$$
\phie(p^a) = \sum_{d|a} \mu(a/d) p^d.
$$

Let us list a few first values of $\phie$ on prime powers:

\begin{table}[h]
\begin{tabular}{c|ccccc}
$a$ & 1 & 2 & 3 & 4 & 5 \\\hline
$\phie(p^a)$ & $p$ & $p^2-p$ & $p^3-p$ & $p^4-p^2$ & $p^5-p$
\end{tabular}
\end{table}

Note that $\phie(n)/n$ depends only on the square-full part of $n$.
Trivially
$$
\limsup_{n\to\infty} {\phie(n) \over n} = 1
$$
and one can show utilizing Mertens formula (cf.~\cite[Th.~328]{hardy2008}) that
$$
\liminf_{n\to\infty} {\phie(n) \llog n \over n} = e^{-\gamma}.
$$
Instead we prove an explicit result.

\begin{theorem}
For any $n>44100$
\begin{equation}\label{eq:phie-explicit}
{\phie(n) \llog n \over n} \ge C e^{-\gamma},
\qquad
C = 0.993957.
\end{equation}
\end{theorem}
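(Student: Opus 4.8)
The plan is to bound $\phie(n)/n$ from below by a product over the primes dividing $n$ to a power at least two. From the displayed table one sees that $\phie(p^a)/p^a=1$ for $a=1$, while for $a\ge 2$ one has $\phie(p^a)/p^a\ge 1-1/p$ with equality exactly at $a=2$; expanding $\phie(p^a)/p^a=\sum_{e\mid a}\mu(e)\,p^{-a(e-1)/e}$ reduces this to a short check, since for $a\ge 3$ the largest deviation from $1$ already carries exponent at least $2$. Hence, if $r$ denotes the product of the primes dividing $n$ with exponent $\ge 2$, then
\[
\frac{\phie(n)}{n}\ \ge\ \prod_{p\mid r}\Bigl(1-\frac1p\Bigr)=\frac{\phi(r)}{r},
\qquad n\ge r^2 .
\]
Because $\phi(r)/r$ depends only on $r$ and is, among all $r$ with a prescribed number of prime factors, smallest when $r$ is a primorial, the extremal $n$ are the squared primorials $N_k=(p_1\cdots p_k)^2$. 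Write $G(n)=\phie(n)\llog n/n$ for brevity.

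I would then feed in two lower bounds for $\llog n$: the inequality $n\ge r^2$ gives $\llog n\ge\log 2+\llog r$, and the hypothesis gives $\llog n>\llog 44100$. These two coincide exactly at $r=2\cdot3\cdot5\cdot7=210$, i.e.\ at $n=210^2=44100=N_4$, which is what pins the threshold: at $n=44100$ one has $G(n)=(8/35)\llog 44100\approx0.542<Ce^{-\gamma}$, so the bound is genuinely false there and can hold only once $n$ strictly exceeds it.

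The argument now splits according to the size of $r$. For $r\le 210$ there are only finitely many admissible radicals; for each, $\phi(r)/r$ is an explicit rational and the hypothesis $n>44100$ forces a further prime factor, hence an explicit lower bound on $\llog n$. A direct check of these finitely many cases gives $G(n)\ge Ce^{-\gamma}$ comfortably (the tightest is $r=210$, whose smallest admissible $n$ is $44100\cdot 11$, giving $G(n)\approx0.588$). For $r>210$ one has $n\ge r^2>44100$ automatically and $G(n)\ge(\phi(r)/r)\log(2\log r)$; radicals with $\omega(r)\le4$ keep $\phi(r)/r\ge 8/33$, bounded away from its primorial value, and are cleared directly, while for $\omega(r)\ge 5$ this reduces to $G(N_k)$ with $k=\omega(r)\ge5$. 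Thus it remains to prove $G(N_k)=(\phi(r_k)/r_k)\log(2\theta(p_k))\ge Ce^{-\gamma}$ for all $k\ge5$, with $r_k=p_1\cdots p_k$. A block of small $k$ is settled by evaluating $\prod_{i\le k}(1-1/p_i)$ and $\theta(p_k)$ directly; for the tail I would invoke an explicit Mertens estimate of Rosser--Schoenfeld type, $\prod_{p\le y}(1-1/p)>e^{-\gamma}(\log y)^{-1}\bigl(1-(\log y)^{-2}\bigr)$, giving $G(N_k)/e^{-\gamma}\ge(1-(\log p_k)^{-2})\,\frac{\log(2\theta(p_k))}{\log p_k}$, which exceeds $C$ for every $k$ beyond the checked block and tends to $1$.

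The main obstacle is precisely this uniform tail estimate. The true infimum of $G$ over $n>44100$ equals $e^{-\gamma}$, approached from above along the squared primorials as $k\to\infty$, so the whole difficulty is to control the Rosser--Schoenfeld error terms sharply enough that the provable constant is as large as $C=0.993957$, rather than the markedly weaker value one obtains by applying $r/\phi(r)<e^{\gamma}\llog r+2.50637/\llog r$ directly near $r=210$. Fixing the cut-off between the finite verification and the Mertens tail, and confirming that the combined error never consumes more than the permitted $1-C=0.006043$, is where the real work lies.
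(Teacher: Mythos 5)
Your overall strategy is the same as the paper's: reduce to squared primorials via the lower bound $\phie(p^a)/p^a\ge 1-1/p$, verify a finite block numerically, and control the tail with explicit Mertens-type estimates (the paper uses Dusart's bounds on $\sum_{p\le x}p^{-1}$ and $\sum_{p\le x}\log p$ for $x\ge 10\,544\,111$, with numerics below that threshold, rather than Rosser--Schoenfeld). However, the proposal has two genuine gaps. First, and most importantly, you explicitly defer the decisive quantitative step --- showing that the explicit Mertens error terms, after the cut-off is chosen, still yield the constant $C=0.993957$ --- and acknowledge that ``the real work lies'' there. Since the entire content of the theorem is this explicit constant, the proof is not complete as written; the paper carries this out by computing $C_1=0.725132$ (from $\exp(-y-y^2/2-cy^3)\le 1-y$) and $C_2=0.769606$ (from Dusart's bound on $\sum_{p\le x}p^{-1}$), whose product is exactly $0.993957$.

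Second, your finite case analysis for $r\le 210$ contains an error. You claim that for $r=210$ the hypothesis $n>44100$ ``forces a further prime factor,'' with smallest admissible $n$ equal to $44100\cdot 11$. But $n=88200=2^3\cdot3^2\cdot5^2\cdot7^2$ satisfies $n>44100$ and has square-full radical exactly $210$, and for it your crude bound gives only
$$
\frac{\phie(n)}{n}\,\llog n \;\ge\; \frac{8}{35}\,\llog(88200)\;\approx\;0.556\;<\;0.5581\;\approx\;Ce^{-\gamma},
$$
so the reduction to $\phi(r)/r$ is insufficient precisely in the borderline cases where $n$ exceeds $44100$ by raising an exponent rather than by adjoining a prime. (The theorem does hold there, since $\phie(2^3)/2^3=3/4$, not $1/2$.) The repair is to track exponents: $\phie(p^a)/p^a\ge 1-p^{-2}$ for $a\ge3$, which is what the paper implicitly exploits by enumerating the finitely many ``bad'' square-full parts $k$ themselves (namely $4,8,9,36,900,44100$) rather than their radicals, and then bounding $f(kl)\ge\tfrac12\llog(4l)$ to limit the square-free cofactor to $l\le5$.
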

\begin{proof}
Denote for brevity $ f(n) = \phie(n) \llog n / n $.

Let $s(n)$ count primes, squares of which divide $n$:
$ s(n) = \sum_{p^2 \mid n} 1 $.
Let $p_k$ denote the~$k$-th prime: $p_1=2$, $p_2=3$ and so on.
One can check that
$$
f(n) \le f\left( \prod_{p \le p_{s(n)}}  p^2 \right).
$$

\medskip

Now we are going to prove that for every $x\ge 11$ inequality~\eqref{eq:phie-explicit} holds for~$n\hm=\prod_{p\le x} p^2$. On such numbers we have
$$
f(n) = \prod_{p\le x} (1-p^{-1}) \cdot \log\left( 2 \sum_{p\le x} \log p \right)
$$
and our goal is to estimate the right hand side from the bottom. By Dusart~\cite{dusart1999} we know that for $ x \ge x_0 = 10\,544\,111 $
\begin{align}
\label{eq:explicit-p-1}
\sum_{p\le x} p^{-1} &\le \llog x + B + {1\over 10 \log^2 x} + {4\over 15\log^3 x}, \\
\label{eq:explicit-log-p}
\sum_{p\le x} \log p &\ge x\left( 1 - {0.006788 \over \log x} \right)
\end{align}

Now since $\exp(-y-y^2/2-cy^3) \le 1-y$ for $0\le y\le 1/2$ and~$c\hm=8\log 2-5 \hm= 0.545177$ we have
\begin{multline*}
\prod_{p\le x} (1-p^{-1})
\ge \prod_{p\le x} \exp(-p^{-1}-p^{-2}/2-c p^{-3})
\ge \\ \ge
\left( \prod_p \exp(-p^{-2}/2-c p^{-3}) \right) \prod_{p\le x} \exp(-p^{-1})
=: C_1 \prod_{p\le x}  \exp(-p^{-1}),
\end{multline*}
where $C_1=0.725132$. Further, by~\eqref{eq:explicit-p-1} for $x\ge x_0$
\begin{multline*}
\prod_{p\le x}  \exp(-p^{-1})
= \exp\left( - \sum_{p\le x} p^{-1} \right)
\ge \\ \ge \log^{-1} x \cdot \exp\left(-B - {1\over 10 \log^2 x} - {4\over 15\log^3 x}\right)
\ge \\ \ge \log^{-1} x \cdot \exp\left(-B - {1\over 10 \log^2 x_0} - {4\over 15\log^3 x_0}\right)
=:  C_2 \log^{-1} x,
\end{multline*}
where $C_2=0.769606$. And by~\eqref{eq:explicit-log-p} for $x\ge x_0$
$$
\log\left( 2 \sum_{p\le x} \log p \right)
\ge \log\left( 2 x \left( 1-{0.006788 \over \log x} \right) \right)
\ge \log x.
$$

Finally, we obtain that for $x\ge x_0$, $n=\prod_{p\le x} p^2$ we have
\begin{equation}\label{eq:c1-c2}
f(n) \ge C_1 C_2 = 0.993957 e^{-\gamma}.
\end{equation}
Numerical computations show that in fact \eqref{eq:c1-c2} holds for $p_5 \hm= 11 \hm\le x \hm< x_0$ and~$n \hm= (2\cdot3\cdot5\cdot7)^2 \hm= 44100$ is the largest exception of form~$\prod_{p\le x} p^2$.

\medskip

To complete the proof we should show that the theorem is valid for each $n \hm> 44100$ such that~$s(n)\le4$. Firstly, one can  validate that the only square-full numbers~$k$ for which $f(k) \ge Ce^{-\gamma}$ and $s(k)\le 4$ are 4, 8, 9, 36, 900, 44100. Secondly, let $n=kl$, where $k>1$ stands for square-full part and $l$ for square-free part, $\gcd(k,l)=1$. Then
$$
f(n) = {\phie(k) \llog k \over k} \cdot {\phie(l)\over l} \cdot {\llog kl \over \llog k}
\ge {2\llog 4 \over 4} \cdot {\llog 4l \over \llog 4} = {\llog 4l \over 2}.
$$
This inequality shows that if $f(n) \le C e^{-\gamma}$ then $\llog 4l \le 2 C e^{-\gamma}$ or equivalently~$l \hm\le 5$.

Thus the complete set of suspicious numbers is
$$
\bigl\{kl \mid k\in(4, 8, 9, 36, 900, 44100), l \in \{1,2,3,5\}, \gcd(k,l)=1 \bigr\}
$$
and fortunately all of them are less or equal to 44100.
\end{proof}

\begin{theorem}
$$
\sum_{n\le x} \phie(n) = C x^2 + O(x \log^{5/3} x),
$$
where $C$ is a computable constant.
\end{theorem}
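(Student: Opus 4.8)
The plan is to analyse the Dirichlet series $F(s)=\sum_{n\ge1}\phie(n)n^{-s}$ through its Euler product and reduce the estimate to a convolution with $\id$. On prime powers $\phie(p^a)=\sum_{d\mid a}\mu(a/d)p^d=p^a-\cdots$, so the local factor is $\sum_{a\ge0}\phie(p^a)p^{-as}$. Pulling out the dominant $p^a$ (which matches $\zeta(s-1)$) and tracking the two next-largest borderline terms, one checks that the coefficient of $p^{-s}$ in each remaining local factor vanishes (indeed $\phie(p)=p$), and I expect
$$
F(s)=\zeta(s-1)\,\frac{\zeta(3s-2)}{\zeta(2s-1)}\,U(s),
$$
where $U(s)=\prod_p\bigl(1+O(p^{1-3s})+O(p^{3-5s})+O(p^{4-6s})+\cdots\bigr)$ converges absolutely for $\sigma>5/6$. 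Equivalently $\phie=\id\star g$ with $g$ multiplicative and \emph{supported on square-full numbers}, which matches the earlier remark that $\phie(n)/n$ depends only on the square-full part of $n$.

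Next I would extract the main term. Writing $\sum_{m\le y}m=\tfrac12 y^2+r(y)$ with $r(y)\ll y$,
$$
\sum_{n\le x}\phie(n)
=\sum_{d\le x}g(d)\Bigl(\tfrac12(x/d)^2+r(x/d)\Bigr)
=\frac{x^2}{2}\sum_{d=1}^\infty\frac{g(d)}{d^2}
-\frac{x^2}{2}\sum_{d>x}\frac{g(d)}{d^2}
+\sum_{d\le x}g(d)\,r(x/d).
$$
The completed series furnishes the main term with $C=\tfrac12 G(2)=\tfrac12\,\zeta(4)\zeta(3)^{-1}U(2)$, a computable constant. Since $g$ lives on square-full numbers with $|g(d)|\ll d^{2/3+\eps}$ (the largest relative size occurring on cubes, where the factor $\zeta(3s-2)$ lives), the tail satisfies $\sum_{d>x}|g(d)|/d^2\ll x^{-1}\log x$ and so contributes only $O(x\log x)$; this part is routine.

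The whole difficulty is the oscillating sum $R(x)=\sum_{d\le x}g(d)\,r(x/d)$. The crude bound $|R(x)|\le x\sum_{d\le x}|g(d)|/d$ gives only $O(x\log^2 x)$: the Dirichlet series of $|g|$ behaves like $\zeta(2s-1)\zeta(3s-2)$ near $s=1$, a \emph{double} pole, whence $\sum_{d\le y}|g(d)|\asymp y\log y$ and the weight $1/d$ manufactures a second logarithm. To reach $x\log^{5/3}x$ one must recover the cancellation hidden in the factor $1/\zeta(2s-1)$, which is discarded by the $|g|$-estimate. The route I would take is to factor $g=g_1\star g_2$, with $g_2\leftrightarrow1/\zeta(2s-1)$ supported on squares, $g_2(m^2)=\mu(m)m$, and $g_1\leftrightarrow\zeta(3s-2)U(s)$ supported essentially on cubes; then $R(x)=\sum_m\mu(m)m\sum_{b\le x/m^2}g_1(b)\,r\bigl(x/(bm^2)\bigr)$. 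When the inner sum over the cube-supported $g_1$ is expanded, the smooth part of its main term is annihilated by the (quantitatively controlled) convergence of $\sum_m\mu(m)/m$ and $\sum_m\mu(m)m^{-1}\log m$, so a full logarithm is saved relative to the $|g|$-bound.

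The main obstacle is precisely this last step: the $|g|$-estimate is off by exactly one logarithm, and squeezing out the correct power $5/3$ rather than $2$ demands quantitative cancellation for the Möbius-type factor $g_2$ (via Walfisz-type mean-value bounds stemming from the zero-free region) together with a careful hyperbola treatment balancing the residual error of the cube-supported inner sum against the summation over squares. Everything else — the Euler-product factorisation, the identification of $C$, and the tail bound — is mechanical.
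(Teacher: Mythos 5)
Your setup coincides with the paper's: the factorisation $F(s)=\zeta(s-1)\zeta(3s-2)\zeta(2s-1)^{-1}G(s)$ (the paper gets absolute convergence of the remainder factor for $\sigma>4/5$ rather than $5/6$, which is immaterial), the identification of the main term $Cx^2$, and the observation that the trivial bound on the remainder sum gives only $O(x\log^2 x)$ because $|g|$ has a double pole at $s=1$ are all correct. The gap is the step you yourself flag as ``the whole difficulty'': the saving of the missing $\log^{1/3}$ is never proved, and the mechanism you propose for it points in the wrong direction. You seek cancellation in the M\"obius factor $g_2\leftrightarrow 1/\zeta(2s-1)$; but pairing $\mu(m)$ against the genuinely oscillating quantity $r(x/(bm^2))$ (essentially $-(x/(bm^2))(\{x/(bm^2)\}-\tfrac12)$) is delicate, is not carried out, and is not where the exponent $5/3$ comes from.

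The paper locates the saving elsewhere. It writes $\phie=z\star g$ with $z\leftrightarrow\zeta(s-1)\zeta(3s-2)/\zeta(2s-1)$ and treats the block $\zeta(s-1)\zeta(3s-2)$ by citing P\'etermann's Theorem~1, namely $\sum_{n_1n_3^3\le y}n_1n_3^2=\tfrac12\zeta(4)y^2+O(y\log^{2/3}y)$ --- the $\log^{2/3}$ is a Walfisz-type saving in the associated fractional-part sums, and this is exactly the quantitative input your sketch lacks. The factor $1/\zeta(2s-1)$ is then summed with \emph{absolute values}: $\sum_{n_2\le x^{1/2}}n_2\cdot(x/n_2^2)\log^{2/3}x\ll x\log^{5/3}x$, which costs precisely the one extra logarithm turning $2/3$ into $5/3$; the M\"obius cancellation enters only through the absolutely convergent series $\sum\mu(n_2)n_2^{-3}=1/\zeta(3)$ in the main term. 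Without either proving (or citing) a $y\log^{2/3}y$ bound for the $\zeta(s-1)\zeta(3s-2)$ piece, or actually executing your proposed $\mu$-cancellation argument with quantitative Walfisz-type bounds, the argument stalls at $O(x\log^{2}x)$ and does not reach $O(x\log^{5/3}x)$.
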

\begin{proof}
Let $s$ be a complex number such that $\sigma > 4/5$. For $a\ge4$ one have $\phie(p^a) \hm= p^a+O(p^{a/2})$
and
$$
\sum_{a=4}^\infty p^{a/2-4a/5} = \sum_{a=4}^\infty p^{-3a/10} \ll p^{-12/10} \ll p^{-1}.
$$
We have
\begin{multline*}
\mathfrak{F}(p)
:= \sum_{a=0}^\infty \phie(p^a) p^{-as}
=\\= 1 + p^{1-s} + (p^{2-2s}-p^{1-2s}) + (p^{3-3s}-p^{1-3s}) + \sum_{a=4}^\infty p^{a-as} + O(p^{-1}).
\end{multline*}
Then
\begin{multline*}
(1-p^{1-s}) \mathfrak{F} (p) = 1 - p^{1-2s} + p^{2-3s} - p^{1-3s} + p^{2-4s} + O(p^{-1})
=\\= 1 - p^{1-2s} + p^{2-3s} + O(p^{-1})
\end{multline*}
and
$$
{ (1-p^{1-s}) (1-p^{2-3s}) \over 1-p^{1-2s} } \mathfrak{F} (p) = 1+O(p^{-1}).
$$
Taking product by $p$ we obtain
$$
\sum_{n=1}^\infty \phie(n) n^{-s}
= \prod_p \mathfrak{F}(p)
= {\zeta(s-1) \zeta(3s-2) \over \zeta(2s-1)} G(s),
$$
where $G(s)$ converges absolutely for $\sigma > 4/5$. This means that
$\phie \hm= z \star g$, where
$$
z(n) = \sum_{n_1 n_2^2 n_3^3 = n} n_1 \mu(n_2) n_2 n_3^2
$$
and $\sum_{n\le x} |g(n)| \ll x^{4/5+\eps}$.

By \cite[Th. 1]{petermann1997} we have $\sum_{n_1 n_3^3 \le y} n_1 n_3^2 = y^2  \zeta(4) / 2 + O(y \log^{2/3} y)$, so
\begin{multline*}
\sum_{n\le x} z(n)
 = \sum_{n_2 \le x^{1/2}} \mu(n_2) n_2 \left( {\zeta(4) \over 2} {x^2\over n_2^4} + O\left({x\over n_2^2} \log^{2/3} x\right) \right)
 =\\= {\zeta(4) \over 2 \zeta(3)} x^2 + O(x \log^{5/3} x).
\end{multline*}

Standard convolution argument completes the proof.
\end{proof}

\section{Values of $\taui$ and $\tauei$}\label{s:taui}

Note that $\taui(p)=\taui(p^2)=\taui(p^4)=2$, $\taui(p^3)=\taui(p^5)=4$ and more generally
\begin{equation}\label{eq:taui-primorial}
\taui(p^a) = 2^{u(a)},
\end{equation}
where $u(a)$ is equal to the number of units in binary representation of~$a$. Thus~$\taui(p^a) \hm\le a+1$ and $\taui(n) \ll n^\eps$.

\begin{theorem}
$$
\sum_{n\le x} \taui(n) = (D_1\log x + D_2) x + E(x),
$$
where $D_1$, $D_2$ are computable constants. In unconditional case
$$
E(x) \ll x^{1/2} \exp(-A \log^{3/5} x \llog^{-1/5} x), \qquad A>0,
$$
and under Riemann hypothesis
$
E(x) \ll x^{5/11+\eps}
$.
\end{theorem}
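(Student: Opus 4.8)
The plan is to study the Dirichlet series $F(s) = \sum_{n=1}^\infty \taui(n) n^{-s}$ and to recover the asymptotics of its partial sums by Perron's formula together with a contour shift. By \eqref{eq:taui-primorial} the local factor at $p$ is $\sum_{a\ge0}\taui(p^a)p^{-as} = \sum_{a\ge0}2^{u(a)}p^{-as}$. Writing $a$ in binary and using that $2^{u(a)}$ factors over the binary digits of $a$, this local factor equals $\prod_{i\ge0}(1+2p^{-2^is})$, so $F(s) = \prod_{i\ge0}P(2^is)$ with $P(s) := \prod_p(1+2p^{-s})$. Comparing Euler factors gives $P(s) = \zeta^2(s)\zeta^{-2}(2s)\tilde U(s)$, where $\tilde U(s) = \prod_p(1+2p^{-s})(1+p^{-s})^{-2}$ converges absolutely for $\Re s > 1/2$; telescoping the product over $i$ (the factors $\zeta^{\pm2}(2^is)$ cancel in consecutive pairs) yields
\[ F(s) = \frac{\zeta^2(s)}{\zeta(2s)}\,H(s), \]
where $H$ is a Dirichlet series converging absolutely in the half-plane $\Re s > 1/3$.

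Since $\zeta^2(s)/\zeta(2s)$ is the Dirichlet series of $2^{\omega(n)}$ (the number of squarefree, equivalently unitary, divisors of $n$, with $2^{\omega(n)} = \sum_{d^2\mid n}\mu(d)\tau(n/d^2)$), the factorization exhibits $\taui = g \star h$ with $g(n) = 2^{\omega(n)}$ and $\sum_{n\le x}|h(n)| \ll x^{1/3+\eps}$. Writing $Q(y) = \sum_{m\le y}2^{\omega(m)}$, I would split $\sum_{n\le x}\taui(n) = \sum_d h(d)\,Q(x/d)$ by the hyperbola/convolution method. Because $\sum_d |h(d)| d^{-\beta}$ converges for every $\beta > 1/3$, completing the sum reproduces a main term of the shape $(D_1\log x + D_2)x$ (the $\log x$ reflecting the double pole of $\zeta^2(s)$ at $s=1$, and $D_1 = H(1)/\zeta(2)$), while the error term of $\sum_{n\le x}\taui(n)$ inherits, up to a constant multiple, that of $Q$, since $Q$'s error exceeds $x^{1/3}$. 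Thus the theorem reduces to the corresponding statement for the summatory function of $2^{\omega(n)}$.

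For $Q(y)$ I would apply Perron's formula to $\zeta^2(s)/\zeta(2s)$ and move the line of integration to the left. The only singularity crossed is the double pole at $s=1$, which produces the main term; note that $s=1/2$ is a regular point (indeed a zero), because there $\zeta(2s)$ has a pole and $1/\zeta(2s)$ vanishes, so no $x^{1/2}$ main term arises. In the unconditional case the contour may be pushed just inside the classical Vinogradov--Korobov zero-free region of $\zeta(2s)$, whose poles of $1/\zeta(2s)$ sit at $s=\rho/2$ for zeros $\rho$ of $\zeta$; the narrow width of this region forces the contour only slightly to the left of $\Re s = 1/2$, and optimizing against the Perron truncation error converts that width into the saving $\exp(-A\log^{3/5}x\,\llog^{-1/5}x)$ on top of $x^{1/2}$. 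Under the Riemann hypothesis every $\rho/2$ has real part $1/4$, so there is no analytic obstruction until $\Re s = 1/4$, and the exponent $5/11$ emerges from balancing the contour position against the polynomial growth of $\zeta^2(s)$ on vertical lines (controlled by second-moment estimates for $\zeta$, available under RH via the Lindelöf bound) and the truncation error.

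The Euler-product bookkeeping and the convolution step are routine; the genuinely delicate point is the conditional error term for $Q$. The naive contour (or, equivalently, the elementary identity above, leading to $\sum_{d\le\sqrt y}\mu(d)\Delta(y/d^2)$) only delivers $x^{1/2}$, because for $d$ close to $\sqrt y$ the divisor remainder $\Delta(y/d^2)$ is of size $O(1)$ over $\asymp\sqrt y$ terms, so no pointwise estimate suffices there. Beating $x^{1/2}$ to reach $x^{5/11+\eps}$ forces one to exploit the cancellation in $\sum_d\mu(d)$ (where RH enters through $\sum_{d\le t}\mu(d)\ll t^{1/2+\eps}$) on the large-$d$ range, and to play this against the pointwise bound $\Delta(y)\ll y^{\theta_2+\eps}$ on the small-$d$ range; optimizing the split point is exactly where $5/11$ is produced, and this balancing is the main obstacle.
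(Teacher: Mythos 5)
Your Dirichlet-series bookkeeping is essentially right and parallels the paper's: the local factor of $\taui$ is $(1-p^{-s})^{-2}(1-p^{-2s})(1-p^{-3s})^{-2}\bigl(1+O(p^{-4s})\bigr)$, so $F(s)=\zeta^2(s)\zeta^{-1}(2s)H(s)$ with $H(s)=\zeta^2(3s)G(s)$ absolutely convergent for $\sigma>1/3$; since $1/3<5/11$ and $1/3<1/2$, folding the $\zeta^2(3s)$ into $H$ and reducing to $\sum_{n\le x}2^{\omega(n)}$ by the convolution method is legitimate. (One small slip: after your telescoping all the $\zeta^{\pm2}(2^is)$ cancel completely, so the surviving $\zeta^{-1}(2s)$ must be extracted from $\tilde U(s)$ itself, whose Euler factor is $1-p^{-2s}+2p^{-3s}-\cdots$; the formula is correct but not for the reason stated.) The paper keeps $\zeta^2(3s)$ explicit, quotes Krätzel for $\sum_{n\le x}\tau(1,1,3,3;n)$, and then invokes Ivić's theorem (the cited [Ivić 1978, Th.~2]) as a black box that converts the residual $\zeta^{-1}(2s)$ factor into exactly the two stated error terms.

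The gap is that you prove neither error bound, and those bounds are the entire content of the theorem beyond the shape of the main term. For the conditional exponent you say $5/11$ ``emerges from balancing'' and then concede that ``this balancing is the main obstacle'' without carrying it out; moreover the split you sketch does not obviously close, because partial summation of $\sum_{D<d\le\sqrt y}\mu(d)\Delta(y/d^2)$ against $M(t)\ll t^{1/2+\eps}$ costs the full total variation of $d\mapsto\Delta(y/d^2)$, which is of order $(y/D^2)\log y$, and balancing $y^{5/4}D^{-2}$ against $y^{\theta_2}D^{1-2\theta_2}$ gives the exponent $(5/4-\theta_2/2)/(3-2\theta_2)$, which exceeds $5/11$ for every proven value of $\theta_2$ (one would need $\theta_2\le 5/18$). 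The unconditional argument as described is also not sound: shifting the Perron contour of the whole integrand to just left of $\sigma=1/2$ cannot yield $x^{1/2}\exp(-A\log^{3/5}x\,\llog^{-1/5}x)$, since on that line $\zeta^2(s)$ grows polynomially in $t$ and the truncation term $x^{1+\eps}T^{-1}$ forces a choice of $T$ that wipes out any super-logarithmic saving; the exponential factor actually comes from Walfisz's bound on $\sum_{d\le t}\mu(d)$ applied to the Möbius tail alone, with the $\zeta^2(s)$ part treated elementarily. Both estimates for the $\zeta^2(s)/\zeta(2s)$ problem exist in the literature --- that is precisely what the paper's citation of Ivić supplies --- so the repair is either to cite such a result or to supply the two missing analytic arguments; as written, your proposal establishes the main term but not the error terms claimed.
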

\begin{proof}
Let us transform Dirichlet series for $\taui$ into a product of zeta-functions:
\begin{multline*}
\sum_{n=1}^\infty \taui(n) n^{-s}
= \prod_p \sum_{a=0}^\infty \taui(p^a) p^{-as}
=\\= \prod_p \bigl( 1 + 2p^{-s} + 2p^{-2s} + 4p^{-3s} + O(p^{\eps-4s}) \bigr)
=\\= \prod_p {\bigl(1+O(p^{\eps-4s})\bigr) (1-p^{-2s}) \over (1-p^{-s})^2 (1-p^{-3s})^2}
= {\zeta^2(s) \zeta^2(3s) \over \zeta(2s)} G(s),
\end{multline*}
where series $G(s)$ converges absolutely for $\sigma > 1/4$.

By \cite[Th. 6.8]{kratzel1988} together with estimate $\theta_2 < 131/416+\eps$ from \cite{huxley2005} we get
$$
\sum_{n\le x} \tau(1,1,3,3;n) = (C_1 \log x + C_2) + (C_3 \log x + C_4) x^{1/3} + O(x^{547/1664+\eps}).
$$
Now the statement of the theorem can be achieved by application of  Ivić's \cite[Th.~2]{ivic1978}.
Alas, term $(C_3 \log x + C_4) x^{1/3}$ will be absorbed by error term.
\end{proof}

\begin{theorem}
\begin{equation}\label{eq:taui^2-sum}
\sum_{n\le x} \bigl( \taui(n) \bigr)^2
= P_3(\log x) x + O(x^{1/2} \log^9 x),
\end{equation}
where $P_3$ is a polynomial, $\deg P_3 = 3$.
\end{theorem}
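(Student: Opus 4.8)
The plan is to follow the pattern of the preceding theorems: write the Dirichlet series of $(\taui)^2$ as a product of zeta-functions times an absolutely convergent tail, and then reduce the summatory function to the classical mean value of $\tau^2$, keeping careful track of the powers of $\log$.

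First I would expand the Euler product. Since $(\taui(p^a))^2 = 4^{u(a)}$, the initial coefficients are $1,4,4,16,\dots$, so
\begin{multline*}
\sum_{n=1}^\infty (\taui(n))^2 n^{-s}
= \prod_p \bigl( 1 + 4p^{-s} + 4p^{-2s} + 16p^{-3s} + O(p^{\eps-4s}) \bigr)
=\\=
\prod_p \frac{(1-p^{-2s})^6 \bigl( 1 + O(p^{\eps-3s}) \bigr)}{(1-p^{-s})^4}
= \frac{\zeta^4(s)}{\zeta^6(2s)}\, G(s),
\end{multline*}
where $G(s)$ converges absolutely for $\sigma > 1/3$. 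The order-$4$ pole at $s=1$ already accounts for the shape $P_3(\log x)\,x$ of the main term with $\deg P_3 = 3$, and the zero of $\zeta^{-6}(2s)$ at $s=1/2$ means there is no secondary main term of order $x^{1/2}$.

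Next I would separate the square part. By the classical identity $\sum_n \tau^2(n) n^{-s} = \zeta^4(s)/\zeta(2s)$ I factor $\zeta^4(s)/\zeta^6(2s) = \bigl(\zeta^4(s)/\zeta(2s)\bigr)\,\zeta^{-5}(2s)$, so that
\[
(\taui)^2 = \tau^2 \star c \star g,
\]
where $c$ is supported on perfect squares with $\sum_n c(n) n^{-s} = \zeta^{-5}(2s)$ and $|c(b^2)| = \tau_5(b)$, while $g$ satisfies $\sum_{n\le x} |g(n)| \ll x^{1/3+\eps}$. Inserting the classical asymptotic $\sum_{n\le y}\tau^2(n) = y\,Q(\log y) + O(y^{1/2}\log^4 y)$ with $\deg Q = 3$ (see \cite{kratzel1988}) and summing over the square support of $c$ gives
\[
\sum_{n\le x}(\tau^2\star c)(n)
= \sum_{b^2\le x} c(b^2) \sum_{a\le x/b^2}\tau^2(a)
= x\,P_3(\log x) + O\!\Bigl( x^{1/2}\log^4 x \sum_{b\le x^{1/2}}\frac{\tau_5(b)}{b} \Bigr),
\]
with $\deg P_3 = 3$. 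Since $\sum_{b\le x^{1/2}}\tau_5(b)/b \ll \log^5 x$, the error here is $O(x^{1/2}\log^9 x)$; the convolution with $g$ is harmless, because $\sum_{n\le x}|g(n)|\ll x^{1/3+\eps}$ forces $\sum_d |g(d)| d^{-1/2}$ to converge, so the main term retains its form and the error stays $O(x^{1/2}\log^9 x)$.

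The main obstacle is the input estimate for $\sum_{n\le y}\tau^2(n)$: both the exponent $1/2$ and, decisively, the precise power $\log^4$ rest on the fourth power moment $\int_0^T |\zeta(\tfrac12+it)|^4\,dt \ll T\log^4 T$ combined with $\zeta(1+2it)^{-1}\ll \log t$. It is exactly this $\log^4$, amplified by the $\log^5$ coming from the fivefold square factor $\zeta^{-5}(2s)$, that yields the $\log^9$; any slack in the mean value of $\tau^2$ would propagate directly into the final exponent of the logarithm.
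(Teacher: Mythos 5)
Your argument is correct, but it takes a genuinely different route from the paper. The paper applies Perron's formula directly to $F(s)=\zeta^4(s)\zeta^{-6}(2s)H(s)$, shifts the contour to $\sigma=1/2$, and bounds the vertical integral by
$x^{1/2}\int_1^T \zeta^4(1/2+it)\,\zeta^{-6}(1+2it)\,t^{-1}\,dt \ll x^{1/2}\log^9 T$,
the exponent $9=4+5$ coming from $\zeta^{-6}(1+2it)\ll\log^4 t$ together with the weighted fourth moment $\int_1^T|\zeta(1/2+it)|^4\,t^{-1}\,dt\ll\log^5 T$; the horizontal segments are killed by the convexity bound $\zeta(\sigma+iT)\ll T^{(1-\sigma)/3}$ with $T=x^{3/4}$. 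You instead push all the analytic input into the classical asymptotic for $\sum_{n\le y}\tau^2(n)$ and finish with an elementary triple convolution $\tau^2\star c\star g$; your $\log^9$ also arises as $4+5$, with the $\log^5$ now supplied by $\sum_{b\le\sqrt x}\tau_5(b)/b$. Both proofs ultimately rest on the same fourth-moment estimate for $\zeta$, but yours avoids contour integration entirely and transfers automatically any future sharpening of the error term for $\sum\tau^2(n)$. Two small points to tidy up: first, in the main term you must complete the sum over $b$ to infinity and check that the tail contributes $\ll x\log^3 x\sum_{b>\sqrt x}\tau_5(b)b^{-2}\ll x^{1/2}\log^7 x$, which is absorbed by the stated error; second, the final power of the logarithm is only as good as the power in your cited input for $\sum_{n\le y}\tau^2(n)$ --- if the reference actually gives $O(y^{1/2}\log^5 y)$ rather than $O(y^{1/2}\log^4 y)$, you land on $\log^{10}$, so that estimate needs a precise citation (it is itself proved by the very Perron-plus-fourth-moment argument the paper runs directly).
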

\begin{proof}
We have
$$
\bigl( \taui(p) \bigr)^2 =
\bigl( \taui(p^2) \bigr)^2 = 4,
$$
so
$$
F(s) := \sum_{n=1}^\infty \bigl( \taui(n) \bigr)^2 n^{-s}
= {\zeta^4(s) \over \zeta^6(2s)} H(s),
$$
where series $H(s)$ converges absolutely for $\sigma > 1/3$.

By Perron formula for $c:=1+1/\log x$, $\log T \asymp \log x$ we have
$$
\sum_{n\le x} \bigl( \taui(n) \bigr)^2
= {1\over 2\pi i} \int_{c-iT}^{c+iT} F(s) x^s s^{-1} ds
+ O(x^{1+\eps} T^{-1}).
$$
Moving the contour of the integration till $[1/2 \hm -iT, 1/2 \hm+ iT]$ we get
$$
\sum_{n\le x} \bigl( \taui(n) \bigr)^2
 = \res_{s=1} F(s)x^s s^{-1} + O(I_0 + I_- + I_+ + x^{1+\eps} T^{-1}),
$$
where
$$
I_0 := \int_{1/2-iT}^{1/2+iT} F(s) x^s s^{-1} ds,
\qquad
I_\pm := \int_{1/2\pm iT}^{c\pm iT} F(s) x^s s^{-1} ds.
$$
Function $F(s) x^s s^{-1}$ has a pole of fourth order at $s=1$, so $\res_{s=1} F(s) \* x^s \* s^{-1}$ has form $P_3(\log x) x$. Let us estimate the error term.

Take $T=x^{3/4}$.
Firstly,
$$
I_+ \ll T^{-1} \int_{1/2}^c {\zeta^4(\sigma+iT) \over \zeta^6(2\sigma+2iT)} x^\sigma\, d\sigma
$$
Using classic estimates $\zeta(\sigma+iT) \ll T^{(1-\sigma)/3}$ for $\sigma\in[1/2,1)$ and $\zeta(\sigma\hm+iT) \ll T^\eps$ for $s\ne\sigma\ge1$ we obtain $I_+ \ll x^{1/4+\eps}$. The same can be proven for $I_-$.

Secondly, taking into account bounds $\zeta^{-1}(1+it) \ll \log^{2/3} t$ and $\int_1^T \zeta(1/2+it)^2 t^{-1} dt \ll \log^5 T$ (see \cite{ivic2003} or  \cite{titchmarsh1986}) we have
\begin{equation}\label{eq:I0-estimate}
I_0 \ll x^{1/2} \int_1^T {\zeta^4(1/2+it) \over \zeta^6(1+2it)} {dt\over t} \ll x^{1/2} \log^9 T,
\end{equation}
which completes the proof.
\end{proof}

Recently Jia and Sankaranarayanan  proved in the preprint~\cite{jia2014} that
$$
\int_1^T {\zeta^4(1/2+it) \over \zeta^k(1+2it)} dt \ll T \log^4 T,
$$
so summing up integrals over intervals $[2^n, 2^{n+1}]$ for $n=0,\ldots,\lfloor \log_2 T \rfloor$ leads to
$$
\int_1^T {\zeta^4(1/2+it) \over \zeta^k(1+2it)} {dt\over t} \ll \log^5 T.
$$
Thus instead of~\eqref{eq:I0-estimate} we get $I_0 \ll x^{1/2} \log^5 x$, which provides us with a better error term in~\eqref{eq:taui^2-sum}.

\bigskip

Function $E\taui$ has Dirichlet series $\zeta(s) \zeta(2s) \zeta^{-1}(4s) H(s)$, where $H(s)$ converges absolutely for $\sigma > 1/5$, very similar to function $t^{(e)}$, studied by Tóth~\cite{toth2007b} and Pétermann~\cite{petermann2010}. The latter achieved error term $O(x^{1/4})$ in the asymptotic expansion of $\sum_{n\le x} t^{(e)}$; the same result holds for $E\taui$.

Dirichlet series for $\bigl(E\taui(n)\bigr)^2$ is similar to $\bigl(\taue(n)\bigr)^2$: both of them are~$\zeta(s) \* \zeta^3(2s) H(s)$, where $H(s)$ converges absolutely for $\sigma > 1/3$. Krätzel proved in \cite{kratzel2010} that the asymptotic expansion of $\sum_{n\le x} \bigl(\taue(n)\bigr)^2$ has error term~$O(x^{11/31})$; the same is true for $\bigl(E\taui(n)\bigr)^2$.

\section{E-$\infty$-perfect numbers}\label{s:perfect}

Let $\sigmaei$ denote {\em sum-of-e-$\infty$-divisor function,} where e-$\infty$-divisors were defined in Section~\ref{s:introduction}. We call $n$ e-$\infty$-perfect if $\sigmaei(n) = 2n$. As far as $\sigmaei(n)/n$ depends only on square-full part of~$n$, we consider only square-full~$n$ below. We found following examples of e-$\infty$-perfect numbers:
\begin{multline*}
36,
2700,
1800,
4769\,856,
357\,739\,200,
238\,492\,800,
54\,531\,590\,400, \\
1307\,484\,087\,615\,221\,689\,700\,651\,798\,824\,550\,400\,000.
\end{multline*}
All of them are e-perfect also: $\sigmae(n) = 2n$. We do not know if there are any e-$\infty$-perfect numbers, which are not e-perfect.

Equation~\eqref{eq:taui-primorial} implies that $\taui(n)$ is even for $n\ne 1$. Then for $p>2$, $a>1$ the value of $\sigmaei(p^a)$ is a sum of even number of odd summands and is even. Thus if $n$ is odd and~$\sigmaei(n) \hm= 2n$ then $n=p^a$, $p>2$. But definitely $\sigmaei(p^a) \hm\le \sigma(p^a) < 2p^a$. We conclude that all e-$\infty$-perfect numbers are even.

Are there e-$\infty$-perfect numbers, which are not divisible by 3? For e-perfect numbers Fabrykowski and Subbarao \cite{fabrykowski1986} have obtained that if~$\sigmae(n)\hm=2n$ and~$3\hm\nmid n$ then $n>10^{664}$. We are going to show that in the case of e-$\infty$-perfect even better estimate can be given.

\begin{lemma}
\begin{align}
\nonumber
{\sigmaei(p)^{\phantom1} \over p} &= 1, \\
\nonumber
{\sigmaei(p^2) \over p^2} &= 1+p^{-1}, \\
\label{eq:sigmaei-a6}
{\sigmaei(p^a) \over p^a} &\le 1+2p^{-a/2} \text{~for~} a\ge6, \\
\label{eq:sigmaei-a3}
{\sigmaei(p^a) \over p^a} &\le 1+p^{-2} \text{~for~} a\ge3.
\end{align}
\end{lemma}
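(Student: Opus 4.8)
The plan is to reduce everything to the explicit formula
$\sigmaei(p^a) = \sum_{d \mid^\infty a} p^d$,
where the sum runs over the positive infinitary divisors $d$ of the exponent $a$. This is immediate from the definition of e-$\infty$-divisors, since $p^d \mid^{(e)\infty} p^a$ holds exactly when $d \mid^\infty a$. The first two identities then follow by direct inspection: the infinitary divisors of $1$ are $\{1\}$, giving $\sigmaei(p) = p$, and those of $2$ are $\{1,2\}$, giving $\sigmaei(p^2) = p + p^2$.

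For the two inequalities I would write the ratio as $\sigmaei(p^a)/p^a = 1 + S$ with $S = \sum_{d \mid^\infty a,\, d < a} p^{d-a}$ and bound $S$ from above. The crucial simplification is that every infinitary divisor of $a$ is an ordinary divisor of $a$, and every \emph{proper} divisor satisfies $d \le a/2$; hence, enlarging the sum to range over all ordinary divisors, $S \le p^{-a} \sum_{d \mid a,\, d \le a/2} p^d$. Bounding the inner sum by the full geometric sum $\sum_{k=0}^{\lfloor a/2\rfloor} p^k$ and using $p/(p-1) \le 2$ for $p \ge 2$ yields $S \le 2 p^{-a/2}$. This already delivers \eqref{eq:sigmaei-a6} for every $a \ge 6$ (in fact for all $a$).

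Then I would derive \eqref{eq:sigmaei-a3} from the same estimate wherever it is strong enough: for $a \ge 6$ one has $2p^{-a/2} \le p^{-2}$, since this is equivalent to $p^{(a-4)/2} \ge 2$, which holds because $p \ge 2$ and $a - 4 \ge 2$. The three remaining exponents $a \in \{3,4,5\}$ I would treat by direct computation, listing the infinitary divisors $\{1,3\}$, $\{1,4\}$, $\{1,5\}$ to obtain $S = p^{-2}, p^{-3}, p^{-4}$, each at most $p^{-2}$.

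The main obstacle is precisely this boundary region. Replacing the infinitary divisors by all divisors is convenient but discards the structure of $a$ and inflates $S$ by the geometric-series factor $p/(p-1)$; for large $a$ this slack is harmless, but \eqref{eq:sigmaei-a3} is sharp at $a = 3$ (equality there), so the blunt geometric bound overshoots and the small exponents must be checked by hand. I expect no difficulty beyond this finite case analysis, since there is no Euler-product shortcut available: the infinitary divisors of $a$ are \emph{products}, not sums, of its infinitary prime-power components, so $\sigmaei(p^a)/p^a$ does not factor, and the direct summation above is the natural route.
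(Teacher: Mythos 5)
Your proposal is correct and follows essentially the same route as the paper: both arguments bound the contribution of the non-trivial terms by noting that every (infinitary, hence ordinary) proper divisor of $a$ is at most $a/2$, sum the resulting geometric series with $p/(p-1)\le 2$ to get the $2p^{-a/2}$ bound, and then obtain \eqref{eq:sigmaei-a3} from \eqref{eq:sigmaei-a6} for $a\ge6$ together with a direct check at $a=3,4,5$. The only difference is cosmetic (you phrase the estimate via $S=\sigmaei(p^a)/p^a-1$ and observe it holds for all $a$), so nothing further is needed.
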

\begin{proof}
Two first identities are trivial. For $a\ge 6$ all non-proper divisors of $a$ are less or equal to~$a/2$, so
$$
\sigmaei(p^a) \le p^a + \sum_{b=1}^{a/2} p^b \le p^a + {p(p^{a/2}-1) \over p-1} \le p^a + 2p^{a/2}.
$$
This provides~\eqref{eq:sigmaei-a6}. Inequality~\eqref{eq:sigmaei-a3} can be directly verified for $a\hm=3,4,5$ and follows from~\eqref{eq:sigmaei-a6} for $a\ge6$.
\end{proof}

\begin{lemma}
Let $b(t) = \max_{\tau \ge t} {\sigmaei(2^\tau) 2^{-\tau}}$. Then
\begin{equation}\label{eq:b(t)-estimates}
b(t) \le \begin{cases}
5/4, & t\le3, \\
39/32, & 2<t\le6, \\
1+2^{1-t/2}, & t>6.
\end{cases}
\end{equation}
\end{lemma}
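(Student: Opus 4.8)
The plan is to reduce everything to the single‑prime estimates of the preceding lemma, applied at $p=2$. Writing $g(\tau):=\sigmaei(2^\tau)\,2^{-\tau}$, we have $b(t)=\max_{\tau\ge t}g(\tau)$, the maximum running over integers $\tau$. Enlarging $t$ only discards terms from this maximum, so $b$ is nonincreasing in $t$; hence on each interval it suffices to bound $g(\tau)$ for every integer $\tau$ above the relevant threshold, the left endpoint controlling the whole interval.

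The third case is immediate. For $t>6$ every integer $\tau\ge t$ satisfies $\tau\ge7\ge6$, so \eqref{eq:sigmaei-a6} with $p=2$ gives $g(\tau)\le1+2^{1-\tau/2}$. Since $1+2^{1-\tau/2}$ is decreasing in $\tau$ and $\tau\ge t$, each such term is at most $1+2^{1-t/2}$, whence $b(t)\le1+2^{1-t/2}$, with no computation required.

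For the first case the key fact is that whenever $\tau\ge3$ inequality \eqref{eq:sigmaei-a3} with $p=2$ yields $g(\tau)\le1+2^{-2}=5/4$; thus as soon as the maximum runs only over $\tau\ge3$ we obtain $b(t)\le5/4$ (the first case is really $2<t\le3$, since for $t\le2$ the admissible value $\tau=2$ gives $g(2)=3/2>5/4$). The one range the per‑prime bounds do not settle outright is $3<t\le6$, where $\tau\ge4$ and the target $39/32<5/4$ is strictly sharper. Here I would compute $g(\tau)$ directly from the infinitary‑divisor structure of the \emph{exponent}: the infinitary divisors of $4=2^2$, of $5$, and of $6=2\cdot3$ are $\{1,4\}$, $\{1,5\}$ and $\{1,2,3,6\}$, giving $g(4)=9/8$, $g(5)=17/16$ and $g(6)=(2+4+8+64)/64=39/32$, while for $\tau\ge7$ the decreasing bound \eqref{eq:sigmaei-a6} gives $g(\tau)\le1+2^{1-7/2}=1+2^{-5/2}<39/32$.

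Combining these, $\max_{\tau\ge4}g(\tau)=g(6)=39/32$, so $b(t)\le39/32$ throughout $3<t\le6$. The single place where an honest calculation must be pushed through is precisely this last comparison: one must verify that the explicit value $g(6)=39/32$ dominates both its neighbours $g(4),g(5)$ and the entire tail $\{g(\tau):\tau\ge7\}$, the latter reducing to the numerical inequality $2^{-5/2}<7/32$. Everything else is formal, flowing from the monotonicity of $b$ together with the already‑established bounds \eqref{eq:sigmaei-a3} and \eqref{eq:sigmaei-a6}.
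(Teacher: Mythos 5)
Your argument is correct and is essentially the paper's, which compresses everything into ``follows from \eqref{eq:sigmaei-a6} and direct computations for small $\tau$'' together with the two values $\sigmaei(2^3)=10$ and $\sigmaei(2^6)=78$; you simply supply the omitted checks (the values $g(4)=9/8$, $g(5)=17/16$, the tail comparison $1+2^{-5/2}<39/32$, and the use of \eqref{eq:sigmaei-a3} for the $5/4$ bound). Your parenthetical correction is also right and worth recording: since $g(2)=\sigmaei(2^2)2^{-2}=3/2$, the bound $5/4$ can only hold for $t\ge3$, and consequently the middle range must read $3<t\le6$ rather than $2<t\le6$ (at $t=3$ one has $b(3)=g(3)=5/4>39/32$) --- these are misprints in the stated case boundaries rather than gaps in your proof, and they are harmless for the subsequent application of the lemma, where the argument of $b$ is large.
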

\begin{proof}
Follows from~\eqref{eq:sigmaei-a6} and direct computations for small $\tau$:
$$
\sigmaei(2^3) = 10, \qquad \sigmaei(2^6) = 78.
$$
\end{proof}

\begin{theorem}
If $n$ is e-$\infty$-perfect and $3 \nmid n$ then $n>1.35\cdot10^{816}$.
\end{theorem}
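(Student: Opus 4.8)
The plan is to combine multiplicativity of $\sigmaei$ with the exactness of $\sigmaei(n)=2n$ and the absence of the prime $3$. Since we may restrict to square-full $n$, and $n$ is even, I would write $n=2^{a_0}\prod_{p\ge5}p^{a_p}$ with each $a_p\ge2$ and no factor $3$, so that e-$\infty$-perfectness becomes
$$
2=\frac{\sigmaei(2^{a_0})}{2^{a_0}}\prod_{p\ge5}\frac{\sigmaei(p^{a_p})}{p^{a_p}}.
$$
Two structural consequences drive everything. First, because $3\nmid 2n=\sigmaei(n)$ and $\sigmaei$ is multiplicative, we must have $3\nmid\sigmaei(p^{a_p})$ for every prime power $p^{a_p}\,||\,n$. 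Second, every prime dividing any $\sigmaei(p^{a_p})$ divides $2n$, hence lies in the support of $n$.

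The key step is the mod-$3$ admissibility. Writing $\sigmaei(p^a)\equiv\sum_{d\mid^\infty a}p^d\pmod3$ and splitting on $p\bmod 3$: for $p\equiv1\pmod3$ every exponent is admissible and the largest factor is $1+p^{-1}$ at $a=2$ (Lemma~1); for $p\equiv2\pmod3$ — and for $p=2$ — the exponents $2,4,6$ all give $3\mid\sigmaei(p^a)$ and are forbidden. Hence an odd prime $p\equiv2\pmod3$ contributes at most $1+p^{-2}$ (Lemma~1, $a\ge3$), and the prime $2$ contributes at most $5/4$ (Lemma~2, the best admissible exponent being $a_0=3$). In particular the cheap small prime $5$ is barred from exponent $2$, and the factor from $2$ is $\le5/4$, so the odd primes must supply a factor $\ge8/5$ almost entirely through primes $\equiv1\pmod3$ at exponent $2$.

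Next I would run the forcing cascade, following Fabrykowski and Subbarao. The admissible $\sigmaei(2^{a_0})$ forces an odd prime into $n$ (for $a_0=3$ this is $5$, since $\sigmaei(2^3)=10$); that prime is itself square-full with exponent $2$ forbidden, so its $\sigmaei$ forces a further prime, and each prime $p\equiv1\pmod3$ used at exponent $2$ drags in the prime divisors of $p+1$. Tracking these forced divisors shows that an admissible closed support must contain long chains of ever larger primes, and feeding the factor bounds of Lemmas~1--2 into the product $=2$ controls how little each newly forced (often $\equiv2\pmod3$) prime contributes while costing at least $p^2$ or $p^3$ in $n$. This bounds from below the size and number of prime powers dividing $n$, hence $\log n=\sum_p a_p\log p$.

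The explicit constant $1.35\cdot10^{816}$ then comes from a finite but extensive examination of all admissible closed configurations whose product can equal $2$ — an improvement over the $10^{664}$ of Fabrykowski and Subbarao, afforded by the smaller factors of $\sigmaei$ compared with $\sigmae$. I expect this combinatorial step to be the main obstacle: the forcing tree branches, and one must certify that no small admissible support lets the product reach $2$, so that the accumulated prime powers necessarily exceed the stated bound. The arithmetic inequalities (Lemmas~1--2) are routine; organizing and bounding the branching cascade is where the real difficulty lies.
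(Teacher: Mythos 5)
Your setup is sound as far as it goes: restricting to square-full $n$, using multiplicativity, and the mod-$3$ admissibility analysis (primes at exponent $2$ must be $\equiv1\pmod 3$, the prime $2$ cannot carry exponent $2,4,6$, so $\sigmaei(2^{a_0})2^{-a_0}\le 5/4$) all match the paper. But the quantitative engine that actually produces a bound of size $10^{816}$ is missing. With only the \emph{static} bound $5/4$ for the contribution of the prime $2$, the odd primes need to supply a factor of merely $8/5$; since $\sum_{p\equiv1(6)}\log(1+p^{-1})$ reaches the required amount after roughly nine primes ($7,13,\dots,73$), this line of reasoning caps out at a bound on the order of $10^{27}$, nowhere near the claimed $1.35\cdot10^{816}$. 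The paper's decisive idea, which you do not have, is a $2$-adic counting argument: writing $n=2^t\prod_{p\in P}p^2\prod_{q\in Q}q^{a_q}$, each $p\in P$ contributes $v_2(p+1)\in\{1,2,3+\}$ factors of $2$ to the numerator of $\sigmaei(n)/n$ and each $q\in Q$ contributes at least one (since $\sigmaei(q^{a_q})$ is a sum of an even number of odd terms), while $\sigmaei(n)=2n$ forces all but one of these to be absorbed by $2^t$; hence $t\ge t_2+2t_4+3t_8+|Q|+1$. This makes the contribution of the prime $2$ not $5/4$ but $b(t)\le 1+2^{1-t/2}$, which tends to $1$ as $|P|$ grows, so $\prod_{p\in P}(1+p^{-1})$ must come within a sliver of $2$, forcing $|P|$ to be about $133$ primes $\equiv1\pmod 6$ (split by $v_2(p+1)$) and yielding $n>8.49\cdot10^{801}$, improved to $1.35\cdot10^{816}$ by a short case analysis on whether $5,11,17,23$ lie in $Q$.

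The replacement you offer for this — a Fabrykowski--Subbarao-style ``forcing cascade'' through the prime divisors of $p+1$ and of $\sigmaei(2^{a_0})$ — is only sketched, and you yourself flag its certification as the main obstacle. As described it does not work: the forced primes dividing $p+1$ or $\sigmaei(q^{a_q})$ may already lie in the support of $n$ (or be $2$ itself), so nothing in your argument forces ``long chains of ever larger primes,'' and no mechanism is given that would make the accumulated prime powers exceed $10^{816}$ rather than, say, $10^{30}$. In short, the admissibility analysis is correct but the proof of the stated numerical bound is not present; you need the coupling between $|P|$, $|Q|$ and the exponent $t$ of $2$ (or some equally strong substitute) before any finite computation can certify the claim.
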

\begin{proof}
In fact we will give a lower estimate for square-full $n$ such that
for~$u/v \hm= \sigmaei(n)/n$, $\gcd(u,v)=1$, we have
\begin{gather}
\label{eq:uv-problem-3}
3\nmid u, \quad
3\nmid v, \\
\label{eq:uv-problem-2}
2\mid u, \quad
4\nmid u, \quad
2\nmid v, \\
\label{eq:uv-problem-ineq}
u/v \ge 2.
\end{gather}
If this conditions are not satisfied then $n$ is not e-$\infty$-perfect or $3\mid n$.

Let
$$
n = 2^t \prod_{p \in P} p^2 \prod_{q \in Q} q^{a_q},
\qquad t\ge1,
\qquad a_q\ge3,
$$
sets $P$ and $Q$ contain primes $\ge5$ and $P \cap Q = \varnothing$. Then
$$
{u\over v} = {\sigmaei(n) \over n}
= {\sigmaei(2^t) \over 2^t}
\prod_{p\in P} {p+1\over p}
\prod_{q\in Q} {\sigmaei(q^{a_q}) \over q^{a_q}}.
$$
Condition~\eqref{eq:uv-problem-3} implies that all $p\in P$ are of form $p=6k+1$. Split~$P$ into three disjoint sets:
\begin{align*}
P_8 &= \{ p\in P \mid p+1\equiv0 \pmod8 \}, \\
P_4 &= \{ p\in P \mid p+1\equiv4 \pmod8 \}, \\
P_2 &= P \setminus P_4 \setminus P_8.
\end{align*}
Let $t_2 = |P_2|$, $t_4=|P_4|$, $t_8=|P_8|$. Then condition~\eqref{eq:uv-problem-2} implies
$$
t \ge t_2 + 2t_4 + 3t_8 + |Q| + 1.
$$
Now we utilize~\eqref{eq:uv-problem-ineq} to get
\begin{multline*}
2 \le {u\over v} \le b(t_2 + 2t_4 + 3t_8 + |Q| + 1) \prod_{p\in P} (1+p^{-1}) \prod_{p \in Q}(1+q^{-2})
=\\= b(t_2 + 2t_4 + 3t_8 + |Q| + 1) \prod_{p\in P} {1+p^{-1} \over 1+p^{-2}} \prod_{q \in P\cup Q}(1+q^{-2}).
\end{multline*}
But
\begin{equation}\label{eq:prod_q_PQ}
\prod_{q \in P\cup Q}(1+q^{-2})
\le {\prod_q (1+q^{-2}) \over (1+2^{-2}) (1+3^{-2})}
= {\zeta(2)/\zeta(4) \over 25/18} = {54\over5\pi^2},
\end{equation}
so we obtain
$$
{10\pi^2 \over 54} \le b(t_2 + 2t_4 + 3t_8 + 1)
\prod_{p\in P} {1+p^{-1} \over 1+p^{-2}}.
$$

Denote $f(p) = (1+p^{-1}) / (1+p^{-2})$. As soon as $f$ is decreasing we can estimate
$$
\prod_{p\in P} f(p) = \prod_{j\in\{2,4,8\} \atop p\in P_j} f(p)
\le
\prod_{k=1}^{t_2} f(p_{2,k})
\prod_{k=1}^{t_4} f(p_{4,k})
\prod_{k=1}^{t_8} f(p_{8,k}),
$$
where $p_2$ is a sequence of consecutive primes such that $p_{2,k}\equiv1\pmod6$ and~$p_{2,k}\hm+1\hm{\not\equiv}0\pmod4$; $p_4$ is a sequence of consecutive primes such that again~$p_{4,k}\equiv1\pmod6$, but~$p_{4,k}+1\hm\equiv4\pmod8$; and $p_8$ is such that $p_{8,k}\equiv1\pmod6$, $p_{8,k}+1\equiv0\pmod8$.

Now conditions~\eqref{eq:uv-problem-3}, \eqref{eq:uv-problem-2}, \eqref{eq:uv-problem-ineq} can be rewritten as
\begin{multline*}
n \ge \min_{t_2, t_4, t_8} \biggl\{
	2^{t_2+2t_4+3t_8+1}
	\prod_{j \in \{2,4,8\}}
	\prod_{k=1}^{t_j} p_{j,k}^2 \biggm|
	\\
	\biggm|
	{10\pi^2 \over 54} \le b(t_2 + 2t_4 + 3t_8 + 1)
	\prod_{j \in \{2,4,8\}}
	\prod_{k=1}^{t_j} {1+p_{j,k}^{-1} \over 1+p_{j,k}^{-2}}
\biggr\}.
\end{multline*}
This optimization problem can be solved numerically utilizing~\eqref{eq:b(t)-estimates}:
$$
t_2 = 70, \quad
t_4 = 32, \quad
t_8 = 31, \quad
n > 8.49\cdot10^{801}.
$$

We can use $n$'s factor $\prod_{q\in Q} q^{a_q}$ to improve obtained bound. Suppose that any of primes 5, 11, 17, 23 (all of form $6k-1$) is not in $Q$. Then instead of~\eqref{eq:prod_q_PQ} we derive
$$
\prod_{q \in P\cup Q}(1+q^{-2})
\le
{54 \over 5 \pi^2 (1+23^{-2})}.
$$
Same arguments as above shows that in this case $n > 3\cdot10^{823}$. Otherwise, if 5, 11, 17 and 23 are present in $Q$ we get
$$
n > (8.49\cdot10^{801}) \cdot (5\cdot11\cdot17\cdot23)^3 \cdot 2^4
> 1.35\cdot10^{816}.
$$
\end{proof}

\bibliographystyle{ugost2008s}
\bibliography{taue}

\begin{thebibliography}{10}
\def\selectlanguageifdefined#1{
\expandafter\ifx\csname date#1\endcsname\relax
\else\language\csname l@#1\endcsname\fi}
\providecommand*{\href}[2]{{\small #2}}
\providecommand*{\url}[1]{{\small #1}}
\providecommand*{\BibUrl}[1]{\url{#1}}
\providecommand{\BibAnnote}[1]{}
\providecommand*{\BibEmph}[1]{#1}
\providecommand*{\cyrdash}{\hbox to.8em{--\hss--}}
\providecommand*{\BibDash}{\ifdim\lastskip>0pt\unskip\nobreak\hskip.2em\fi
\cyrdash\hskip.2em\ignorespaces}

\bibitem{cohen1990}
\selectlanguageifdefined{english}
\BibEmph{Cohen~G.~L.} On an integer's infinitary divisors~//
  \href{http://dx.doi.org/10.2307/2008701}{\BibEmph{Math. Comput.}} \BibDash
\newblock 1990. \BibDash jan. \BibDash
\newblock Vol.~54, no. 189. \BibDash
\newblock P.~395--411.

\bibitem{cohen1993}
\selectlanguageifdefined{english}
\BibEmph{Cohen~G.~L., {Hagis Jr.}~P.} Arithmetic functions associated with the
  infinitary divisors of an integer~//
  \href{http://dx.doi.org/10.1155/S0161171293000456}{\BibEmph{Int. J. Math.
  Math. Sci.}} \BibDash
\newblock 1993. \BibDash
\newblock Vol.~16, no.~2. \BibDash
\newblock P.~373--383.

\bibitem{dusart1999}
\selectlanguageifdefined{english}
\BibEmph{Dusart~P.} Inégalités explicites pour $\psi(X)$, $\theta(X)$,
  $\pi(X)$ et les nombres premiers~// \BibEmph{C. R. Math. Acad. Sci., Soc. R.
  Can.} \BibDash
\newblock 1999. \BibDash
\newblock Vol.~21, no.~2. \BibDash
\newblock P.~53--59.

\bibitem{fabrykowski1986}
\selectlanguageifdefined{english}
\BibEmph{Fabrykowski~J., Subbarao~M.~V.} On e-perfect numbers not divisible by
  3~// \BibEmph{Nieuw Arch. Wiskd., IV}. \BibDash
\newblock 1986. \BibDash
\newblock Vol.~4. \BibDash
\newblock P.~165--173.

\bibitem{hardy2008}
\selectlanguageifdefined{english}
\BibEmph{Hardy~G.~H., and~E. M.~Wright}. An introduction to the theory of
  numbers~/ Ed.\ by\ D.~R.~Heath-Brown, J.~H.~Silverman. \BibDash
\newblock $6^{\rm th}$, rev. edition. \BibDash
\newblock New York~: Oxford University Press, 2008. \BibDash
\newblock xxi+635~p. \BibDash
\newblock ISBN:~\href{http://isbndb.com/search-all.html?kw=0199219869,
  9780199219865}{0199219869, 9780199219865}.

\bibitem{huxley2005}
\selectlanguageifdefined{english}
\BibEmph{Huxley~M.~N.} Exponential sums and the Riemann zeta function~V~//
  \href{http://dx.doi.org/10.1112/S0024611504014959}{\BibEmph{Proc. Lond. Math.
  Soc.}} \BibDash
\newblock 2005. \BibDash
\newblock Vol.~90, no.~1. \BibDash
\newblock P.~1--41.

\bibitem{ivic1977}
\selectlanguageifdefined{english}
\BibEmph{Ivić~A.} Two inequalities for the sum of divisors functions~//
  \BibEmph{Zb. Rad., Prir.-Mat. Fak., Univ. Novom Sadu}. \BibDash
\newblock 1977. \BibDash
\newblock Vol.~7. \BibDash
\newblock P.~17--22.

\bibitem{ivic1978}
\selectlanguageifdefined{english}
\BibEmph{Ivić~A.} A convolution theorem with applications to some divisor
  functions~// \BibEmph{Publ. Inst. Math., Nouv. Sér.} \BibDash
\newblock 1978. \BibDash
\newblock Vol.~24, no.~38. \BibDash
\newblock P.~67--78.

\bibitem{ivic2003}
\selectlanguageifdefined{english}
\BibEmph{Ivić~A.} The Riemann zeta-function: Theory and applications. \BibDash
\newblock Mineola, New York~: Dover Publications, 2003. \BibDash
\newblock 562~p. \BibDash
\newblock ISBN:~\href{http://isbndb.com/search-all.html?kw=0486428133,
  9780486428130}{0486428133, 9780486428130}.

\bibitem{jia2014}
\selectlanguageifdefined{english}
\BibEmph{Jia~C., Sankaranarayanan~A.} The mean square of divisor function.
  \BibDash
\newblock 2014. \BibDash mar. \BibDash
\newblock \href{http://arxiv.org/pdf/1311.4041v2}{1311.4041v2}.

\bibitem{kratzel1988}
\selectlanguageifdefined{english}
\BibEmph{Krätzel~E.}
  \href{http://dx.doi.org/10.1007/s00605-010-0226-8}{Lattice points}. \BibDash
\newblock Dordrecht~: Kluwer, 1988. \BibDash
\newblock 436~p. \BibDash
\newblock ISBN:~\href{http://isbndb.com/search-all.html?kw=9027727333,
  9789027727336}{9027727333, 9789027727336}.

\bibitem{kratzel1992}
\selectlanguageifdefined{english}
\BibEmph{Krätzel~E.} Estimates in the general divisor problem~//
  \href{http://dx.doi.org/10.1007/BF02941626}{\BibEmph{Abh. Math. Semin. Univ.
  Hamb.}} \BibDash
\newblock 1992. \BibDash dec. \BibDash
\newblock Vol.~62, no.~1. \BibDash
\newblock P.~191--206.

\bibitem{kratzel2010}
\selectlanguageifdefined{english}
\BibEmph{Krätzel~E.} New estimates in the four-dimensional divisor problem
  with applications~//
  \href{http://dx.doi.org/10.1007/s10474-009-9059-0}{\BibEmph{Acta Math.
  Hung.}} \BibDash
\newblock 2010. \BibDash
\newblock Vol. 126, no.~3. \BibDash
\newblock P.~258--278.

\bibitem{kuhleitner1994}
\selectlanguageifdefined{english}
\BibEmph{Kühleitner~M., Nowak~W.~G.} An omega theorem for a class of
  arithmetic functions~//
  \href{http://dx.doi.org/10.1002/mana.19941650107}{\BibEmph{Math. Nachr.}}
  \BibDash
\newblock 1994. \BibDash
\newblock Vol. 165, no.~1. \BibDash
\newblock P.~79--98.

\bibitem{minculete2011d}
\selectlanguageifdefined{english}
\BibEmph{Minculete~N., Tóth~L.} Exponential unitary divisors~// \BibEmph{Ann.
  Univ. Sci. Budap. Rolando Eőtvős, Sect. Comput.} \BibDash
\newblock 2011. \BibDash
\newblock Vol.~35. \BibDash
\newblock P.~205--216.

\bibitem{petermann2010}
\selectlanguageifdefined{english}
\BibEmph{Pétermann~Y.-F.~S.} Arithmetical functions involving exponential
  divisors: Note on two papers by L.~Tóth~// \BibEmph{Ann. Univ. Sci. Budap.
  Rolando Eőtvős, Sect. Comput.} \BibDash
\newblock 2010. \BibDash
\newblock Vol.~32. \BibDash
\newblock P.~143--149.

\bibitem{petermann1997}
\selectlanguageifdefined{english}
\BibEmph{Pétermann~Y.-F.~S., Wu~J.} On the sum of exponential divisors of an
  integer~// \href{http://dx.doi.org/10.1023/A:1006596009566}{\BibEmph{Acta
  Math. Hung.}} \BibDash
\newblock 1997. \BibDash
\newblock Vol.~77, no. 1-2. \BibDash
\newblock P.~159--175.

\bibitem{subbarao1972}
\selectlanguageifdefined{english}
\BibEmph{Subbarao~M.~V.} \href{http://dx.doi.org/10.1007/BFb0058796}{On some
  arithmetic convolutions}~// The theory of arithmetical functions: Proceedings
  of the Conference at Western Michigan University, April~29 --- May~1, 1971.
  \BibDash
\newblock Vol.~251 of \BibEmph{Lecture Notes in Mathematics}. \BibDash
\newblock Berlin~: Springer Verlag, 1972. \BibDash
\newblock P.~247--271.

\bibitem{suryanarayana1975}
\selectlanguageifdefined{english}
\BibEmph{Suryanarayana~D., {Sita Rama Chandra Rao}~R.} On the true maximum
  order of a class of arithmetic functions~// \BibEmph{Math. J. Okayama Univ.}
  \BibDash
\newblock 1975. \BibDash
\newblock Vol.~17. \BibDash
\newblock P.~95--101.

\bibitem{oeis2011}
\selectlanguageifdefined{english}
{The on-line encyclopedia of integer sequences}~/ Ed.\ by\ N.~J.~A.~Sloane.
  \BibDash
\newblock URL: \BibUrl{http://oeis.org}.

\bibitem{titchmarsh1986}
\selectlanguageifdefined{english}
\BibEmph{Titchmarsh~E.~C.} The theory of the Riemann zeta-function~/ Ed.\ by\
  D.~R.~Heath-Brown. \BibDash
\newblock $2^{\rm nd}$, rev. edition. \BibDash
\newblock New-York~: Oxford University Press, 1986. \BibDash
\newblock 418~p. \BibDash
\newblock ISBN:~\href{http://isbndb.com/search-all.html?kw=0198533691,
  9780198533696}{0198533691, 9780198533696}.

\bibitem{toth2004}
\selectlanguageifdefined{english}
\BibEmph{Tóth~L.} On certain arithmetic functions involving exponential
  divisors~// \BibEmph{Ann. Univ. Sci. Budap. Rolando Eőtvős, Sect. Comput.}
  \BibDash
\newblock 2004. \BibDash
\newblock Vol.~24. \BibDash
\newblock P.~285--294.

\bibitem{toth2007b}
\selectlanguageifdefined{english}
\BibEmph{Tóth~L.} On certain arithmetic functions involving exponential
  divisors II~// \BibEmph{Ann. Univ. Sci. Budap. Rolando Eőtvős, Sect.
  Comput.} \BibDash
\newblock 2007. \BibDash
\newblock Vol.~27. \BibDash
\newblock P.~155--166.

\bibitem{toth2007a}
\selectlanguageifdefined{english}
\BibEmph{Tóth~L.} An order result for the exponential divisor function~//
  \BibEmph{Publ. Math. Debrecen}. \BibDash
\newblock 2007. \BibDash
\newblock Vol.~71, no. 1-2. \BibDash
\newblock P.~165--171.

\bibitem{wu1995}
\selectlanguageifdefined{english}
\BibEmph{Wu~J.} Problème de diviseurs exponentiels et entiers
  exponentiellement sans facteur carré~//
  \href{http://dx.doi.org/10.5802/jtnb.136}{\BibEmph{J. Théor. Nombres
  Bordx.}} \BibDash
\newblock 1995. \BibDash
\newblock Vol.~7, no.~1. \BibDash
\newblock P.~133--141.

\end{thebibliography}

\end{document}